\newtheorem{theorem}{Theorem}
\newtheorem{conj}{Conjecture}
\newtheorem{lemma}{Lemma} 
\newtheorem{claim}[lemma]{Claim}
\newtheorem{prob}{Problem}
\theoremstyle{definition}
\def\Z{{\mathbb{Z}}} 
\def\N{{\mathbb{N}}} 
\def\R{{\mathbb{R}}} 
\def\cA{{\mathcal A}}
\def\cB{{\mathcal B}}
\def\cF{{\mathcal F}}
\def\cG{{\mathcal G}}
\def\aa{\bar\alpha}
\def\bb{\bar\beta}
\def\ee{{\epsilon}}
\def\ts{\textstyle}
\title{When are stars the largest cross-intersecting families?}
\author{Norihide Tokushige}
\address{College of Education, Ryukyu University, Nishihara, Okinawa 903-0213, Japan}
\email{hide@edu.u-ryukyu.ac.jp}
\thanks{The author was supported by JSPS KAKENHI 25287031}
\keywords{Cross-intersecting families; Erd\H{o}s-Ko-Rado theorem; Shadow; 
Kruskal--Katona Theorem}
\begin{document}

\begin{abstract}
We provide a necessary and sufficient condition for stars to be the
largest cross-intersecting families.
\end{abstract} 

\maketitle

\section{Introduction}
Let $2^{[n]}$ denote the power set of $[n]:=\{1,2,\ldots, n\}$, and 
let $\binom{[n]}k$ denote the set of all $k$-element subsets of $[n]$.
A family of subsets $\cA\subset 2^{[n]}$ is \emph{intersecting} if
$A\cap A'\neq\emptyset$ for all $A,A'\in\cA$.
If $n<2k$ then $\binom{[n]}k$ itself is intersecting. But if $n\geq 2k$
then $\binom{[n]}k$ is no longer intersecting, and we can ask for the maximum
size of intersecting families in $\binom{[n]}k$.
The Erd\H os--Ko--Rado Theorem answers this question, which tells us that
if $n\geq 2k$ and $\cA\subset\binom{[n]}k$ is intersecting,
then $|\cA|\leq\binom{n-1}{k-1}$. 
Equality comes from a star, that is,
$\cA=\{A\in\binom{[n]}k:i\in A\}$ for some fixed element $i\in[n]$,
and moreover this is the only optimal configuration unless $n=2k$.
In this paper we attempt to extend the result to two families.

Two families $\cA$ and $\cB$ in $2^{[n]}$ are \emph{cross-intersecting}
if $A\cap B\neq\emptyset$ for all $A\in\cA$ and $B\in\cB$.
It is then natural to ask the following.
\begin{prob}
Let $n,k,l$ be positive integers. 
What is the maximum of the product $|\cA||\cB|$ among
cross-intersecting families $\cA\subset\binom{[n]}k$ and 
$\cB\subset\binom{[n]}l$?
\end{prob}
Let $M(n,k,l)$ denote the maximum.
If $k+l>n$ then $M(n,k,l)$ is clearly $\binom nk\binom nl$. 
It is not difficult to see that if $k+l=n$ then 
$M(n,k,l)=\lfloor\frac12\binom nk\rfloor\lceil\frac12\binom nl\rceil$.\footnote{
Note that $A\in\cA$ implies
$[n]\setminus A\not\in\cB$, and $|\cB|\leq\binom nl-|\cA|=\binom nk-|\cA|$.
}
The first non-trivial result was obtained by Pyber \cite{P}, and later extended
by Matsumoto and the author \cite{MT}, 
which states that if $n\geq\max\{2k,2l\}$ then
\begin{align}\label{EKRbound}
M(n,k,l)=\binom{n-1}{k-1}\binom{n-1}{l-1}. 
\end{align}
Moreover the stars are the only optimal configurations if $n>\max\{2k,2l\}$.
Bey \cite{Bey} gave an alternative combinatorial proof, and 
Suda and Tanaka \cite{ST} established a semidefinite programming approach 
to obtain related results including \eqref{EKRbound}.
These are perhaps all the results known about $M(n,k,l)$.
It is open for the remaining cases $k+l<n$ and $n<\max\{2k,2l\}$. These cases
look more interesting and more difficult than the known cases,
for most likely infinitely many different structures appear as optimal families.
This paper addresses the problem of finding the necessary and 
sufficient conditions for $n,k,l$ such that \eqref{EKRbound} holds.
To begin with let us introduce the following cross-intersecting families
$\cA^{(k)}_j$ and $\cB^{(l)}_j$ for $j\geq 0$ defined by
\begin{align*}
\cA_j^{(k)}&:=\{A\in\tbinom{[n]}k:1\in A\}\sqcup
\{A\in\tbinom{[n]}k:A\cap[j+2]=[j+2]\setminus\{1\}\},\\
\cB_j^{(l)}&:=\{B\in\tbinom{[n]}l:1\in B\}\setminus
\{B\in\tbinom{[n]}l:B\cap[j+2]=\{1\}\}.
\end{align*}
To ensure \eqref{EKRbound} it is necessary that
$|\cA^{(k)}_j||\cB^{(l)}_j|$ does not exceed the RHS of \eqref{EKRbound}
for every $j\geq 0$. Indeed we conjecture that this is sufficient as well.
\begin{conj}\label{conj1}
Let $n,k,l$ be positive integers with $k+l<n<\max\{2k,2l\}$.
Suppose that 
$|\cA^{(k)}_j| |\cB^{(l)}_j| < \binom{n-1}{k-1} \binom{n-1}{l-1}$
for all $j\geq 0$. Then $M(n,k,l)=\binom{n-1}{k-1}\binom{n-1}{l-1}$.
Moreover if $\cA\subset\binom{[n]}k$ and $\cB\subset\binom{[n]}l$ are
cross-intersecting families satisfying $|\cA||\cB|=M(n,k,l)$ then
$\cA=\{A\in\binom{[n]}k:i\in A\}$ and 
$\cB=\{B\in\binom{[n]}l:i\in B\}$ for some $i\in[n]$.
\end{conj}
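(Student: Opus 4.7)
The plan is to combine the standard shifting technique with a Hilton--Milner-style structural analysis of shifted cross-intersecting pairs. First, apply the left-compression operations $s_{ij}$ ($1\le i<j\le n$) to $\cA$ and $\cB$ simultaneously; these preserve cardinalities and the cross-intersecting property, so we may assume both families are shifted and $(\cA,\cB)$ is an extremal pair. If $\cA$ is contained in the star on $1$ (or, symmetrically, $\cB$), the hypothesis $k+l<n$ forces the other family into the same star: otherwise some $B\in\cB$ with $1\notin B$ would have to meet every $(k-1)$-subset of $[n]\setminus\{1\}$, forcing $l\ge n-k+1$, a contradiction. In this sub-case $|\cA||\cB|\le\binom{n-1}{k-1}\binom{n-1}{l-1}$, with equality forcing both to be the full star.

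Otherwise, set $\cA^0:=\{A\in\cA:1\notin A\}$ and $\cB^0:=\{B\in\cB:1\notin B\}$; both are non-empty, and by shiftedness $\{2,\ldots,k+1\}\in\cA^0$. Define $j\ge 0$ as the largest integer such that every $A\in\cA^0$ contains $\{2,\ldots,j+2\}$, so that $\cA\subset\cA^{(k)}_j$ and $|\cA|\le|\cA^{(k)}_j|$. The core structural claim I would prove is that for the extremal pair, after shifting and after a preliminary reduction enforcing $\cB^0=\emptyset$, the pair must equal $(\cA^{(k)}_j,\cB^{(l)}_j)$ for this $j$: extremality then forces $\cA^1=\{A:1\in A\}$ to be the full star (since every $\{1\}\cup A'$ cross-intersects every $B\in\cB\subset\{B:1\in B\}$ via $1$), and cross-intersection between $\cA^0$ and the $(l-1)$-sets $\{B\setminus\{1\}:B\in\cB\}$--together with the size bound supplied by $k+l<n$--forces every $B\in\cB$ with $B\cap\{2,\ldots,j+2\}=\emptyset$ to be excluded, giving $\cB\subset\cB^{(l)}_j$ and hence $|\cB|\le|\cB^{(l)}_j|$. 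Combined with the conjecture's hypothesis, this yields $|\cA||\cB|<\binom{n-1}{k-1}\binom{n-1}{l-1}$, contradicting optimality of the non-star pair. Uniqueness of the star configuration then follows by tracing the equality cases back through the shifts and this structural step.

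The main obstacle is the reduction to the sub-case $\cB^0=\emptyset$ (or symmetrically $\cA^0=\emptyset$) for an extremal pair. When both $\cA^0$ and $\cB^0$ are non-empty, the clean dominance by $(\cA^{(k)}_j,\cB^{(l)}_j)$ is not immediate because cross-intersection between $\cA^0$ and $\cB^0$ on $[n]\setminus\{1\}$ permits configurations not of the advertised form. One would have to show, via a Kruskal--Katona-type exchange argument on the restrictions to $\binom{[n]\setminus\{1\}}k$ and $\binom{[n]\setminus\{1\}}l$, that any such two-sided pair can be modified into a one-sided pair (removing $\cB^0$ while possibly completing $\cA^1$ to the full star on $1$) without decreasing the product $|\cA||\cB|$. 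Verifying that this exchange is always possible--so that the extremal configurations in the non-star regime are exactly the $(\cA^{(k)}_j,\cB^{(l)}_j)$--is where the technical heart of the conjecture lies and is what I expect to be the main difficulty.
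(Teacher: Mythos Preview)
The statement is labeled a \emph{conjecture} in the paper and is not proved there; the paper only establishes partial results (Theorem~\ref{thm-unif} for $n$ sufficiently large depending on $(\alpha,\beta)$, and Theorem~\ref{thm1} under the stronger hypotheses \eqref{C1}--\eqref{C2}). Your proposal therefore attempts strictly more than the paper achieves, and the gaps you flag are real.

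Even granting the reduction to $\cB^0=\emptyset$, your ``core structural claim'' does not follow. From the definition of $j$ you correctly obtain $\cA\subset\cA_j^{(k)}$, but the conclusion $\cB\subset\cB_j^{(l)}$ is unwarranted. Your argument for it---that a $B$ with $B\cap[2,j+2]=\emptyset$ must be excluded---only goes through when $\cA^0$ equals the \emph{full} family of $k$-sets in $[2,n]$ containing $[2,j+2]$, because only then do the tails $A\setminus[2,j+2]$ range over all $(k-j-1)$-subsets of $[j+3,n]$ and the hypothesis $k+l<n$ force a contradiction. When $\cA^0$ is a proper subfamily, there may well exist $B$ with $B\cap[j+2]=\{1\}$ that meets every member of $\cA^0$, so $\cB$ can strictly contain $\cB_j^{(l)}$. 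In short, the pairs $(\cA_j^{(k)},\cB_j^{(l)})$ are isolated points along a one-parameter family of Kruskal--Katona-extremal pairs indexed by $|\cA^0|$; knowing that the product is small at these discrete values of $j$ does not, by itself, bound the product at intermediate sizes of $\cA^0$. That interpolation is exactly what the paper cannot carry out in full generality: its partial proofs parametrize by the real-valued cascade representation of $|\cA|$ and bound the resulting product $F(x)$ for \emph{all} $x$, which is what forces either the $n\to\infty$ asymptotics of Claims~\ref{claimA}--\ref{claimC-app} or the stronger condition \eqref{C2}.

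The exchange you propose for the two-sided case (delete $\cB^0$, complete $\cA^1$) has the same defect: even if one can pass to a one-sided pair without decreasing the product, one lands somewhere in the continuum of one-sided configurations, not necessarily at some $(\cA_j^{(k)},\cB_j^{(l)})$. So the difficulty is not only the reduction you single out; it is equally the unproved assertion that the one-sided optimum is always attained at one of the $(\cA_j^{(k)},\cB_j^{(l)})$, which is essentially the conjecture itself restated.
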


We prove the conjecture under some additional conditions. But before
stating our results let us introduce a measure counterpart of the problem,
which is closely related to the original problem and easier to 
understand the corresponding conditions.
For a real number $p\in(0,1)$ and a family of subsets $\cF\subset 2^{{[n]}}$
we define the measure of the family $\mu_p(\cF)$ by
\[
 \mu_p(\cF):=\sum_{F\in\cF}p^{|F|}(1-p)^{n-|F|}.
\]
Then we can ask the following.
\begin{prob}
Let $n\in\N$ and $\alpha,\beta\in(0,1)$.
What is the maximum of the product $\mu_\alpha(\cA)\mu_\beta(\cB)$ among
cross-intersecting families $\cA, \cB\subset 2^{[n]}$?
\end{prob}
Let $m(n,\alpha,\beta)$ denote the maximum. Since $m(n,\alpha,\beta)\leq 1$
and $m(n,\alpha,\beta)$ is increasing\footnote{
Indeed if $m(n,\alpha,\beta)=\mu_{\alpha}(\cA)\mu_\beta(\cB)$ 
for some $\cA,\cB\subset 2^{[n]}$, then 
$\cA':=\cA\cup\{A\cup\{n+1\}:A\in\cA\}$ and
$\cB':=\cB\cup\{B\cup\{n+1\}:B\in\cB\}$ satisfy
$\mu_{\alpha}(\cA)\mu_\beta(\cB)=\mu_{\alpha}(\cA')\mu_\beta(\cB')$,
implying $m(n+1,\alpha,\beta)\geq m(n,\alpha,\beta)$.
} in $n$, we can also define
\[
 m(\alpha,\beta):=\lim_{n\to\infty}m(n,\alpha,\beta).
\]
Note that $m(n_0,\alpha,\beta)=c$ and $m(\alpha,\beta)=c$ imply
$m(n,\alpha,\beta)=c$ for all $n\geq n_0$.
A simple computation shows that
$m(\alpha,\beta)=1$ if $\alpha+\beta>1$, and
$m(\alpha,\beta)=\frac14$ if $\alpha+\beta=1$.
In \cite{T0} it is shown that if $\max\{\alpha,\beta\}\leq\frac12$ then
\begin{align}\label{pEKRbound}
m(n,\alpha,\beta)=\alpha\beta 
\end{align}
for all $n\geq 1$. Moreover it is shown in \cite{STT} that 
if $\max\{\alpha,\beta\}<\frac12$ and $n\geq 4$, then the only
optimal families are the stars, that is, $\cA=\cB=\{F\in 2^{[n]}:i\in F\}$ 
for some $i$. 
\begin{figure}[h]
\includegraphics[width=6cm]{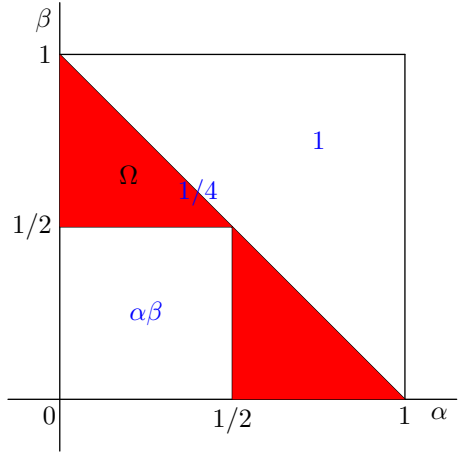}  
\caption{$m(\alpha,\beta)$}\label{fig0}
\end{figure} 
We illustrate the known values of $m(\alpha,\beta)$ in Figure~\ref{fig0}.
The two red triangles indicate the regions where $m(\alpha,\beta)$ is unknown.
By symmetry we focus on the upper left triangle
\[
\Omega:=\{(\alpha,\beta)\in\R^2:
\alpha>0,\, \beta>\frac12, \, \alpha+\beta<1\}. 
\]
In this paper, we provide the necessary and sufficient conditions for 
$(\alpha,\beta)\in\Omega$ satisfying \eqref{pEKRbound}.
A trivial necessary condition comes from the following cross-intersecting
families $\cA_j$ and $\cB_j$ for $j\geq 0$:
\begin{align*}
\cA_j&:=\{A\in 2^{[n]}:1\in A\}\sqcup\{A\in 2^{[n]}:A\cap[j+2]=[j+2]\setminus\{1\}\}\\
\cB_j&:=\{B\in 2^{[n]}:1\in B\}\setminus\{B\in 2^{[n]}:B\cap[j+2]=\{1\}\}.
\end{align*}
If \eqref{pEKRbound} holds, then $\mu_{\alpha}(\cA_j)\mu_\beta(\cB_j)$ 
does not exceed $\alpha\beta$. Our main result states that the converse
is also true.

\begin{theorem}\label{thm-p}
Let $(\alpha,\beta)\in\Omega$. Suppose that 
$\mu_{\alpha}(\cA_j)\mu_\beta(\cB_j)<\alpha\beta$ for all $j\geq 0$.
Then $m(n,\alpha,\beta)=\alpha\beta$ for all $n\geq 1$.
\end{theorem}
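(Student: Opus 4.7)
My plan is to reduce by shifting to a structured setting and induct on $n$, ultimately comparing against the canonical families $\cA_j$ and $\cB_j$. First, apply the shifting operations $S_{ij}$ (for $i<j$) simultaneously to $\cA$ and $\cB$: these preserve the cross-intersecting property and leave $\mu_\alpha$ and $\mu_\beta$ invariant. Passing next to the upper closures preserves cross-intersection while weakly increasing the measures. So one may assume $\cA$ and $\cB$ are shifted up-sets, and note that $\cA_j,\cB_j$ are themselves of this form.

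Now I would induct on $n$; the base case $n=1$ is immediate since cross-intersection forces $\cA,\cB\subseteq\{\{1\}\}$. For the inductive step $n\geq 2$, decompose each family by whether $1$ is present:
\[
\cA^{(0)}:=\{A\in\cA:1\notin A\},\qquad \cA^{(1)}:=\{A\setminus\{1\}:A\in\cA,\,1\in A\},
\]
and similarly $\cB^{(0)},\cB^{(1)}\subset 2^{[2,n]}$. The up-set property gives $\cA^{(0)}\subset\cA^{(1)}$ and $\cB^{(0)}\subset\cB^{(1)}$, and cross-intersection of $(\cA,\cB)$ on $[n]$ implies that the three pairs $(\cA^{(0)},\cB^{(0)})$, $(\cA^{(0)},\cB^{(1)})$, and $(\cA^{(1)},\cB^{(0)})$ are cross-intersecting shifted up-sets on $[2,n]$; by the inductive hypothesis each of the three products is at most $\alpha\beta$. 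The obstacle lies in the expansion
\[
\mu_\alpha(\cA)\mu_\beta(\cB)=\bigl(\alpha\mu_\alpha(\cA^{(1)})+(1-\alpha)\mu_\alpha(\cA^{(0)})\bigr)\bigl(\beta\mu_\beta(\cB^{(1)})+(1-\beta)\mu_\beta(\cB^{(0)})\bigr),
\]
whose cross-term $\alpha\beta\,\mu_\alpha(\cA^{(1)})\mu_\beta(\cB^{(1)})$ has no inductive constraint.

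To close the argument I would split into cases. If $\cA^{(0)}=\emptyset$ and $\cB^{(0)}=\emptyset$, then both $\cA$ and $\cB$ lie in the star through $1$, giving $\mu_\alpha(\cA)\mu_\beta(\cB)\leq\alpha\beta$ directly. Otherwise, say $\cA^{(0)}\neq\emptyset$, and let $j+1:=\min\{|A|:A\in\cA,\,1\notin A\}$; shiftedness then forces $\{2,\ldots,j+2\}\in\cA$, whence every $B\in\cB$ must meet $\{2,\ldots,j+2\}$. The goal in this case is to conclude $\mu_\alpha(\cA)\mu_\beta(\cB)\leq\mu_\alpha(\cA_j)\mu_\beta(\cB_j)$, which together with the theorem's hypothesis closes the induction. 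This final comparison is the principal obstacle I anticipate: it is a fine extremal inequality for shifted up-set cross-intersecting pairs, and the asymmetric location $\beta>\tfrac12>\alpha$ of the parameter is essential for the bound to be tight precisely at $(\cA_j,\cB_j)$ rather than at some other configuration.
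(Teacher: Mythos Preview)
Your proposal does not close. The induction on $n$ via the element-$1$ decomposition founders exactly where you say it does: the term $\alpha\beta\,\mu_\alpha(\cA^{(1)})\mu_\beta(\cB^{(1)})$ carries no cross-intersecting constraint on $(\cA^{(1)},\cB^{(1)})$, so the inductive hypothesis contributes nothing, and the other three terms cannot absorb it. Your fallback --- showing $\mu_\alpha(\cA)\mu_\beta(\cB)\leq\mu_\alpha(\cA_j)\mu_\beta(\cB_j)$ when $j+1$ is the minimal size of a set in $\cA$ missing $1$ --- is not a reduction but a restatement of the difficulty. Nothing in your setup forces $\cA\subseteq\cA_j$ or $\cB\subseteq\cB_j$ (the constraint ``$B$ meets $\{2,\ldots,j+2\}$'' does not put $1$ into $B$), and the hypotheses of the theorem only say that each $\mu_\alpha(\cA_j)\mu_\beta(\cB_j)$ lies below $\alpha\beta$, not that $(\cA_j,\cB_j)$ is extremal within its own class. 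Establishing that sharper extremal statement is at least as hard as the theorem itself; you have named the main obstacle but supplied no mechanism to overcome it. Note too that the situation is not symmetric in $\alpha$ and $\beta$, so the unaddressed case $\cA^{(0)}=\emptyset$, $\cB^{(0)}\neq\emptyset$ would require a separate argument.

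The paper avoids direct induction on $n$ entirely. It first proves the analogous inequality for uniform families (Theorem~\ref{thm-unif}): for $(\alpha,\beta)\in\Delta$, $k=\lfloor\alpha n\rfloor$, $l=\lfloor\beta n\rfloor$ and $n$ large, any cross-intersecting $\cA\subset\binom{[n]}k$, $\cB\subset\binom{[n]}l$ satisfy $|\cA||\cB|\leq\binom{n-1}{k-1}\binom{n-1}{l-1}$. That is done via the Kruskal--Katona theorem: one writes $|\cA|$ in cascade form, bounds $|\cB|$ by the corresponding shadow estimate, and controls the resulting one-variable function through a sequence of calculus claims. The measure statement is then deduced by slicing $\cF,\cG\subset 2^{[n]}$ into uniform layers $\cF^{(k)},\cG^{(l)}$, applying the uniform bound levelwise for $(k,l)$ near $(\alpha n,\beta n)$, and using concentration of the binomial distribution to kill the off-center layers as $n\to\infty$; this yields $m(\alpha,\beta)\leq\alpha\beta$, and monotonicity of $m(n,\alpha,\beta)$ in $n$ then gives the result for every $n\geq 1$.
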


\begin{conj}
Let $\alpha$ and $\beta$ satisfy the premises in Theorem~\ref{thm-p}.
If $\cF$ and $\cG$ are cross-intersecting families in $2^{[n]}$
satisfying $\mu_\alpha(\cF)\mu_\beta(\cG)=\alpha\beta$, then
$\cF=\cG=\{F\in 2^{[n]}:i\in F\}$ for some $i\in[n]$.
\end{conj}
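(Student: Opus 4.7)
The plan is to combine simultaneous shifting with a duality argument that reduces the problem to a classification of self-dual cross-intersecting pairs. For each $1\le a<b\le n$, the simultaneous shift $(\cF,\cG)\mapsto(s_{ab}\cF,s_{ab}\cG)$ preserves $\mu_\alpha$, $\mu_\beta$ and the cross-intersecting property, so iterating to a fixed point yields a shifted pair $(\cF^*,\cG^*)$ with $\mu_\alpha(\cF^*)\mu_\beta(\cG^*)=\alpha\beta$.

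Extremality then automatically forces self-duality. Since $(\cF^*,\cG^*)$ is cross-intersecting, $\cF^*\subseteq\cG^{*\perp}:=\{F:F\cap G\neq\emptyset\text{ for all }G\in\cG^*\}$. By Theorem~\ref{thm-p}, $\mu_\alpha(\cG^{*\perp})\mu_\beta(\cG^*)\le\alpha\beta=\mu_\alpha(\cF^*)\mu_\beta(\cG^*)$, so $\mu_\alpha(\cG^{*\perp})=\mu_\alpha(\cF^*)$ and hence $\cF^*=\cG^{*\perp}$; symmetrically $\cG^*=\cF^{*\perp}$. Shiftedness is preserved by $\perp$ (a short check), so $\cF^*$ and $\cG^*$ are both shifted, up-closed, and form a self-dual cross-intersecting pair.

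The structural heart is the classification lemma that every shifted, up-closed, self-dual cross-intersecting pair in $2^{[n]}$ equals either $(\{H:1\in H\},\{H:1\in H\})$ or $(\cA_j,\cB_j)$ for some $j\ge 0$. Granting this, the hypothesised strict inequality $\mu_\alpha(\cA_j)\mu_\beta(\cB_j)<\alpha\beta$ rules out the second alternative, and so $\cF^*=\cG^*=\{H:1\in H\}$.

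Finally, reverse the shift sequence. A direct case analysis of the pre-images of $(\{H:1\in H\},\{H:1\in H\})$ under a single simultaneous shift $s_{ab}$ shows that the only cross-intersecting pre-images are pairs of stars at a common element $c\in\{1,a,b\}$ (any mixed configuration immediately fails cross-intersection). Iterating upward gives $\cF=\cG=\{H:i\in H\}$ for some $i\in[n]$, as required. The main obstacle is the classification lemma: one must analyse the shifted antichain of minimal sets of $\cF^*$ and use self-duality with respect to $\cG^*$ to force this antichain to be exactly $\{\{1\},\{2,\ldots,j+2\}\}$, likely by induction on the size of a minimal non-singleton member combined with the observation that any additional minimal element in $\cF^*$ would force extra minimal elements in $\cG^*$ incompatible with the duality.
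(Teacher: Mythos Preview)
The statement you address is \emph{Conjecture~2} in the paper; it is not proved there. The paper establishes the inequality $m(n,\alpha,\beta)=\alpha\beta$ (Theorem~\ref{thm-p}) and proves uniqueness only in the uniform setting (the second part of Theorem~\ref{thm-unif}), explicitly leaving the measure-version uniqueness open. So there is no proof in the paper to compare against; I can only assess your attempt on its own merits.

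Your shifting and duality reductions are fine, and the reverse-shifting step at the end is correct: once $\cF^*=\cG^*$ is the star at $1$, the singleton $\{1\}$ lies in both, which forces any cross-intersecting pre-image under $s_{ab}$ to consist of sets all containing a common element of $\{1,b\}$, hence to be a star pair. The fatal gap is the ``classification lemma'': it is false as stated. For any up-closed $\cF\subset 2^{[n]}$ one checks that $\cF^{\perp}=2^{[n]}\setminus\cF^c$ and hence $(\cF^{\perp})^{\perp}=\cF$; thus \emph{every} shifted up-set $\cF$ produces a shifted, up-closed, self-dual cross-intersecting pair $(\cF,\cF^{\perp})$. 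A concrete counterexample is $\cF=\{F:\{1,2\}\subset F\}$ with $\cG=\cF^{\perp}=\{G:G\cap\{1,2\}\neq\emptyset\}$: this pair is shifted, up-closed, self-dual, and cross-intersecting, yet it is neither the star pair nor any $(\cA_j,\cB_j)$. More generally the minimal sets of a shifted up-set can be an arbitrary shifted antichain, so the self-dual pairs form a huge family far beyond your two-item list; the induction you sketch on minimal elements cannot possibly terminate there.

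After your (valid) reduction, what remains is precisely to show that every shifted self-dual pair other than the star pair satisfies $\mu_\alpha(\cF)\mu_\beta(\cF^{\perp})<\alpha\beta$. That is essentially a strict version of Theorem~\ref{thm-p} over all such pairs, and your outline supplies no mechanism for it. Salvaging the approach would require either a quantitative argument (for instance, pushing the Kruskal--Katona estimates of Sections~\ref{sec3}--\ref{sec4} through to measures level by level with strict inequality) or a new structural idea handling arbitrary shifted antichains---which is exactly why the paper records the statement as a conjecture.
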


To visualize the conditions for $\alpha$ and $\beta$ in Theorem~\ref{thm-p}
let us compute the measures of $\cA_j,\cB_j$.
We have
\begin{align*}
\mu_\alpha(\cA_j)&=\alpha+(1-\alpha)\alpha^{j+1},\\
\mu_\beta(\cB_j)&=\beta-\beta(1-\beta)^{j+1}.
\end{align*}
Thus $\mu_\alpha(\cA_j)\mu_\beta(\cB_j)<\alpha\beta$ if and only if
\begin{align}\label{boundaryCi}
\left(1+(1-\alpha)\alpha^j\right) \left(1-(1-\beta)^{j+1}\right)<1.  
\end{align}
By solving \eqref{boundaryCi} for $\beta$ in terms of $\alpha$ we get
$\beta<e_j$, where 
\begin{align}\label{eq:ej}
e_j=e_j(\alpha):=
1-\left(\frac{\alpha^j-\alpha^{j+1}}{1+\alpha^j-\alpha^{j+1}}\right)^{\frac1{j+1}},
\end{align}
see Figure~\ref{fig1}.
\begin{figure}[h]
\includegraphics[width=12cm]{./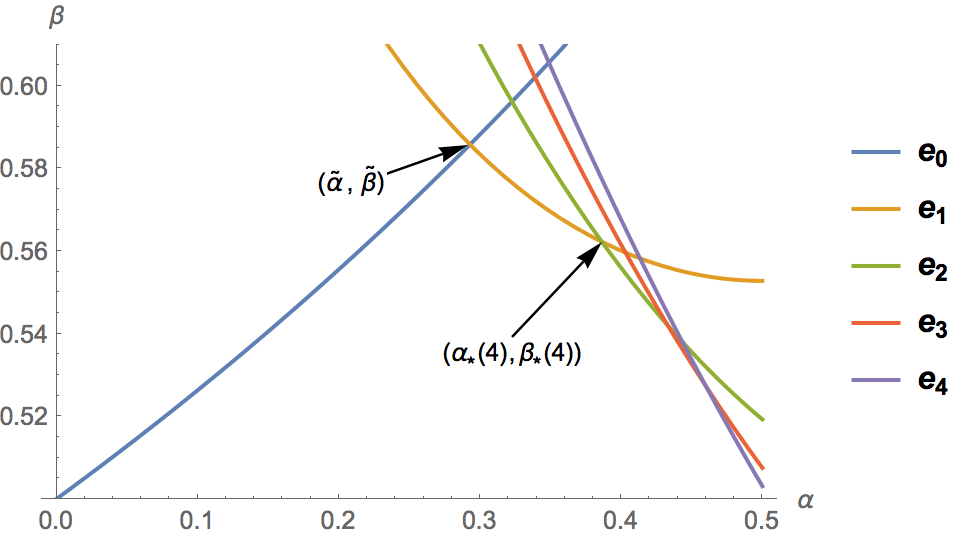}  
\caption{The functions $e_i$}\label{fig1}
\end{figure} 
Then the condition $\mu_\alpha(\cA_j)\mu_\beta(\cB_j)<\alpha\beta$ for all 
$j\geq 0$ is equivalent to 
\begin{align}\label{b<ei}
 \beta<\min\{e_j:j\geq 0\}. 
\end{align}

Let $\Delta\subset\Omega$ be the set of $(\alpha,\beta)$ satisfying \eqref{b<ei},
see Figure~\ref{fig2}. Theorem~\ref{thm-p} tells us that if 
$(\alpha,\beta)\in \Delta$ then $m(\alpha,\beta)=\alpha\beta$, and
conversely if $(\alpha,\beta)\in\Omega$ and 
$m(\alpha,\beta)=\alpha\beta$, then $(\alpha,\beta)$ is in $\Delta$ or its
boundary.

\begin{figure}[h]
\includegraphics[width=11cm]{./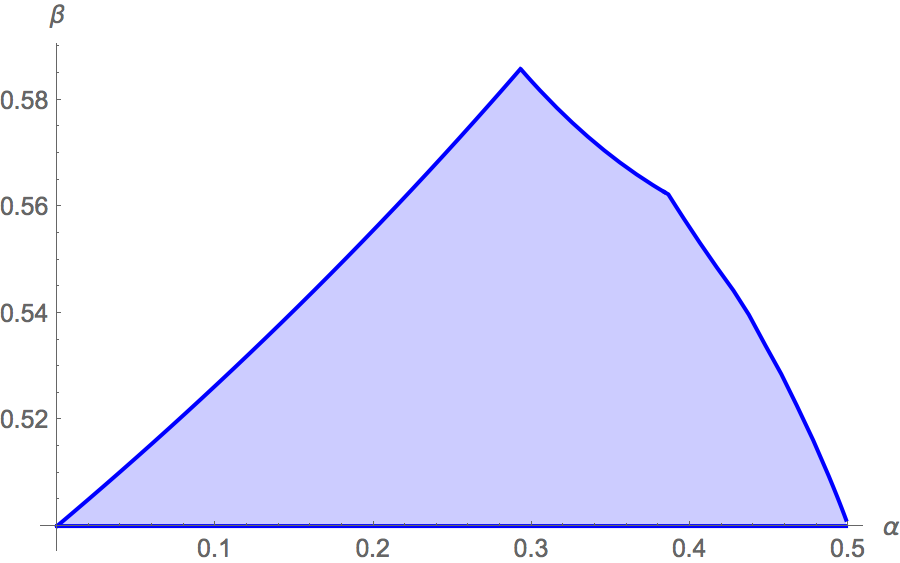}  
\caption{The domain $\Delta$}\label{fig2}
\end{figure} 

We will derive Theorem~\ref{thm-p} from the following result which is 
a partial solution to Conjecture~\ref{conj1}.

\begin{theorem}\label{thm-unif}
Let $(\alpha,\beta)\in\Delta$ be fixed.
Then there exists $n_0=n_0(\alpha,\beta)$
such that if $n>n_0$ then $M(n,k,l)=\binom{n-1}{k-1}\binom{n-1}{l-1}$,
where $k=\lfloor \alpha n\rfloor$ and $l=\lfloor \beta n\rfloor$.
Moreover if $\cA\subset\binom{[n]}k$ and $\cB\subset\binom{[n]}l$ are
cross-intersecting families satisfying 
$|\cA||\cB|=\binom{n-1}{k-1}\binom{n-1}{l-1}$ then
$\cA=\{A\in\binom{[n]}k:i\in A\}$ and 
$\cB=\{B\in\binom{[n]}l:i\in B\}$ for some $i\in[n]$.
\end{theorem}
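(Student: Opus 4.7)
My plan is to apply shifting to reduce to shifted cross-intersecting pairs, perform a structural dichotomy comparing any such pair to either a $1$-star (directly bounded via Kruskal-Katona) or a parametric competitor $(\cA_j^{(k)}, \cB_j^{(l)})$ (bounded via the hypothesis $(\alpha,\beta)\in\Delta$), and then carry out the asymptotic comparison between the finite and measure inequalities.

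\emph{Shifting and star case.} Simultaneous shifts $S_{ij}$ preserve cross-intersection and cardinalities (Frankl), so I may assume $\cA, \cB$ are both shifted. Suppose first $\cA \subseteq \{A: 1 \in A\}$ and set $\cA' = \{A\setminus\{1\}: A \in \cA\} \subseteq \binom{[2,n]}{k-1}$. Cross-intersection forces $\cB \setminus \{B: 1\in B\}$ to be cross-intersecting with $\cA'$ inside $[2,n]$; a Kruskal-Katona bound on the upper shadow of $\cA'$ at level $n-1-l$, combined with $k+l<n$ (which follows from $\alpha+\beta<1$ for $n$ large), yields $|\cA||\cB| \leq \binom{n-1}{k-1}\binom{n-1}{l-1}$, with equality only when both families are full $1$-stars.

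\emph{Non-star case.} Otherwise, some $A \in \cA$ does not contain $1$; let $j \geq 0$ be maximal with $\{2,\ldots,j+2\} \subseteq A$ for every such $A$. Shiftedness yields $\cA \subseteq \cA_j^{(k)}$. The main obstacle is establishing $|\cA||\cB| \leq |\cA_j^{(k)}||\cB_j^{(l)}|$; I expect this to follow from a Kruskal-Katona-style exchange argument, where the non-$1$ part of $\cA$ sits in the shifted subfamily $\{A \in \binom{[n]}{k}: 1 \notin A, \{2,\ldots,j+2\} \subseteq A\}$, the maximum $\cB$ compatible with it is governed by a shadow bound, and filling the non-$1$ part to exhaust $\cA_j^{(k)}\setminus\{A: 1\in A\}$ restricts the admissible $\cB$ precisely to $\cB_j^{(l)}$ while weakly increasing the product.

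\emph{Asymptotic comparison and uniqueness.} A direct counting yields
\[
\frac{|\cA_j^{(k)}||\cB_j^{(l)}|}{\binom{n-1}{k-1}\binom{n-1}{l-1}} = \Bigl(1+\tfrac{\binom{n-j-2}{k-j-1}}{\binom{n-1}{k-1}}\Bigr)\Bigl(1-\tfrac{\binom{n-j-2}{l-1}}{\binom{n-1}{l-1}}\Bigr),
\]
which converges, as $n\to\infty$, to $(1+(1-\alpha)\alpha^j)(1-(1-\beta)^{j+1}) < 1$ for each $j$ by \eqref{boundaryCi}. Since $\alpha < 1-\beta$ forces $(1-\alpha)\alpha^j = o((1-\beta)^{j+1})$ as $j\to\infty$, the supremum of these limits over $j$ is also strictly less than $1$, giving a uniform gap, and a routine choice of $n_0$ absorbs the finite-$n$ error. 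For the uniqueness, strict inequality in the non-star case together with the equality analysis in the star case force every optimal shifted pair to be the full $1$-star, and a standard stability argument then recovers that the original pair is a star at some $i \in [n]$.
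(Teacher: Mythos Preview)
The central step of your non-star case --- ``filling the non-$1$ part to exhaust $\cA_j^{(k)}\setminus\{A:1\in A\}$ \ldots\ while weakly increasing the product'' --- is precisely the content of the theorem, and it is not true in the form you state it. Enlarging $\cA$ forces the admissible $\cB$ to shrink via cross-intersection, and the product $|\cA|\cdot(\text{max compatible }|\cB|)$ is \emph{not} monotone in $|\cA|$; it can have interior maxima on the interval between the star and $\cA_j^{(k)}$. Indeed, if your exchange argument worked as stated it would prove Conjecture~\ref{conj1} for all $n,k,l$, not merely for $n>n_0(\alpha,\beta)$, whereas the paper explicitly leaves the general case open. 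Nothing in the shifted structure of $\cA$ supplies the missing monotonicity.

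The paper's proof takes a different route that avoids both shifting and the structural dichotomy. It works purely with sizes: writing $|\cA|$ in (generalized) cascade form with a real parameter $x$, Lemma~\ref{KK lemma} bounds $|\cB|$ and hence $|\cA||\cB|\le F(x)$ for an explicit polynomial $F$. The technical core (Claims~\ref{claimA}--\ref{claimC-app}) is a derivative argument showing that on each of several carefully chosen sub-intervals for $|\cA|$, one has $F(x)<\max\{F(\text{left end}),F(\text{right end})\}$; the endpoint values are then bounded by $\binom{n-1}{k-1}\binom{n-1}{l-1}$ using \eqref{boundaryCi}. The hypothesis $n>n_0$ enters exactly here, to convert the strict limiting inequalities defining $\Delta$ into the finite-$n$ polynomial inequalities needed. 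Uniqueness then follows directly from the equality case of Kruskal--Katona applied to $\cA^c$, with no shifting and hence no stability issue to unwind.
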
 

To relate Theorem~\ref{thm-p} and Theorem~\ref{thm-unif} let us compare
the sizes of $\cA$ and $\cB$ with their measures. We have
\begin{align*}
 |\cA^{(k)}_j|&=\binom{n-1}{k-1}+\binom{n-j-2}{k-j-1},\\
 |\cB^{(l)}_j|&=\binom{n-1}{l-1}-\binom{n-j-2}{l-1}.
\end{align*}
If $\alpha,\beta$, and $j$ are fixed, and $n\to\infty$ with
$k=\alpha n$ and $l=\beta n$, then it follows that
\begin{align*}
 \frac{|\cA^{(k)}_j|}{\binom nk}\to\mu_\alpha(\cA_j)\text{ and }
 \frac{|\cB^{(l)}_j|}{\binom nl}\to\mu_\beta(\cB_j).
\end{align*}
This means that the two conditions 
\[
|\cA^{(k)}_j| |\cB^{(l)}_j|<\binom{n-1}{k-1}\binom{n-1}{l-1} \text{ and }
\mu_\alpha(\cA_j)\mu_\beta(\cB_j)<\alpha\beta 
\]
are corresponding to each other provided 
\begin{align}\label{a=k/n}
\alpha=\frac kn \text{ and } \beta=\frac ln. 
\end{align}

In Theorem~\ref{thm-unif} we assume that $n,k$, and $l$ are large. 
Instead, if we assume a stronger condition than \eqref{boundaryCi} then 
we get another partial solution to Conjecture~\ref{conj1}. To state the
result, let
\[
 \Omega':=\{(k,l)\in\Z^2:k>0,\,l>\frac n2,\,k+l<n\}.
\]

\begin{theorem}\label{thm1}
Let $(k,l)\in\Omega'$. Suppose that
\begin{align}
&\left(1+\frac{n-k}{n-1}\right)\frac{l-1}{n-1}<1,\label{C1}\\
&(n-k)\sum_{i=n-l}^{n-2}\frac1i-(n-l)\sum_{i=k}^{n-2}\frac1i<0.
\label{C2}
\end{align}
Then $M(n,k,l)=\binom{n-1}{k-1}\binom{n-1}{l-1}$. 
Moreover if $\cA\subset\binom{[n]}k$ and $\cB\subset\binom{[n]}l$ are
cross-intersecting families satisfying $|\cA||\cB|=M(n,k,l)$ then
$\cA=\{A\in\binom{[n]}k:i\in A\}$ and 
$\cB=\{B\in\binom{[n]}l:i\in B\}$ for some $i\in[n]$.
\end{theorem}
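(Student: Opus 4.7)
The plan is to adapt the classical shifting plus Kruskal--Katona strategy used for the EKR-type bound \eqref{EKRbound} in \cite{P,MT,Bey} to the regime $k+l<n<2l$. First I apply the standard shift operations $s_{ij}$ to $\cA$ and $\cB$, which preserve both the cross-intersection property and the cardinalities, so I may assume WLOG that both families are shifted. I then partition by the element $1$, writing $\cF^+:=\{F\in\cF:1\in F\}$ and $\cF^-:=\cF\setminus\cF^+$ for $\cF\in\{\cA,\cB\}$, so that $|\cA||\cB|=(|\cA^+|+|\cA^-|)(|\cB^+|+|\cB^-|)$. Setting $\widetilde{\cF^+}:=\{F\setminus\{1\}:F\in\cF^+\}$, cross-intersection of $(\cA,\cB)$ induces three simultaneous cross-intersecting pairs on the reduced ground set $[2,n]$: $(\cA^-,\cB^-)$, $(\cA^-,\widetilde{\cB^+})$, and $(\widetilde{\cA^+},\cB^-)$.

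If $\cA^-=\emptyset$ (or symmetrically $\cB^-=\emptyset$), then $\cA$ is contained in the star at $1$; using $k+l<n$, cross-intersection forces $\cB$ into the same star, and the target bound is immediate, with equality exactly for the pair of full stars. The substantive case is $\cA^-\neq\emptyset\neq\cB^-$. Here I apply Kruskal--Katona via complementation: setting $\overline{\cB^-}:=\{[2,n]\setminus B:B\in\cB^-\}\subset\binom{[2,n]}{n-1-l}$, the pair $(\cA^-,\cB^-)$ being cross-intersecting is equivalent to $\cA^-\cap\partial^{(k)}\overline{\cB^-}=\emptyset$, which yields
\[
 |\cA^-|\leq\tbinom{n-1}{k}-\tbinom{x}{k}
\]
where the real parameter $x$ encodes $|\cB^-|$ via $|\cB^-|=\binom{x}{n-1-l}$; parallel inequalities apply to the other two pairs (yielding bounds $|\cA^+|\leq\binom{n-1}{k-1}-\binom{x}{k-1}$ and $|\cB^+|\leq\binom{n-1}{l-1}-\binom{y}{l-1}$ with $y$ derived analogously from $|\cA^-|$).

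Substituting these bounds makes $|\cA||\cB|$ a function of the real parameters $x,y$, and the goal is to show that this function is globally maximized at the value corresponding to the star configuration, where the real-parameter optimum equals $\binom{n-1}{k-1}\binom{n-1}{l-1}$. The two hypotheses enter in a complementary fashion: \eqref{C1} is exactly $|\cA^{(k)}_0||\cB^{(l)}_0|<\binom{n-1}{k-1}\binom{n-1}{l-1}$, which rules out the smallest nontrivial shifted competitor (the star augmented by the colex-minimum set $[2,k+1]$ together with its colex predecessors). Condition \eqref{C2} arises as the sign condition on a derivative of the Kruskal--Katona-derived bound at the star: the logarithmic derivative of a product of the form $(\binom{n-1}{k}-\binom{x}{k})\binom{x}{n-1-l}$ at the relevant boundary value, after clearing denominators, is precisely $(n-k)\sum_{i=n-l}^{n-2}1/i-(n-l)\sum_{i=k}^{n-2}1/i$.

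The main technical obstacle is verifying that, under \eqref{C1} and \eqref{C2}, the KK-derived upper bound is \emph{globally} (not just locally) maximized at the star configuration. This amounts to a monotonicity or concavity analysis for the real-variable function of $(x,y)$ on its admissible range, bridged by the discrete competitor estimate \eqref{C1} to close the gap between continuous and integer values of the parameters. Uniqueness of the star structure then follows by tracking the equality conditions in Kruskal--Katona (which force $\cA^-=\cB^-=\emptyset$) and reversing the shift operations in the standard way.
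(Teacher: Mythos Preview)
Your approach departs from the paper's in a basic way, and as stated it has real gaps. The paper uses neither shifting nor a partition by the element $1$. Instead it applies the Kruskal--Katona bound (Lemma~\ref{KK lemma}) directly to the pair $(\cA,\cB)$: one writes
\[
|\cA|=\sum_{i=1}^{t}\binom{n-i}{n-k-i+1}+\binom{x}{n-k-t}
\]
in truncated cascade form, obtains $|\cB|\le\binom{n-t}{l-t}-\binom{x}{l-t}$, and then shows the product is below $\binom{n-1}{k-1}\binom{n-1}{l-1}$ by a \emph{one-parameter} analysis in $x$ for each fixed $t=1,2,3$ (with $t\ge4$ handled by a crude estimate). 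For $t=1$ the argument mimics Claim~\ref{claimC}: at a hypothetical interior maximum the derivative condition, after substitution, becomes exactly the negation of \eqref{C2}, so \eqref{C2} rules it out; \eqref{C1} handles the relevant boundary comparison. Because there is a single real variable per case, the two hypotheses \eqref{C1}--\eqref{C2} suffice.

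Your scheme, by contrast, introduces two independent parameters $x,y$ coming from $|\cB^-|$ and $|\cA^-|$, and you yourself flag the global optimization over this two-dimensional region as the ``main technical obstacle'' without resolving it; it is not at all clear that \eqref{C1} and \eqref{C2} alone control that two-variable problem. There is also a concrete error in the easy case: from $\cA^-=\emptyset$ you cannot conclude that cross-intersection forces $\cB$ into the star at $1$; if $\cA$ is a proper subfamily of the star, $\cB$ may contain many sets missing $1$. What does follow is $|\cA|\le\binom{n-1}{k-1}$, and you then still need an argument like Claim~\ref{easy case} when $|\cB|>\binom{n-1}{l-1}$. Finally, the uniqueness step via ``reversing the shift operations in the standard way'' is not sound: shifts can collapse non-isomorphic extremal families onto the star, so uniqueness must be argued for the \emph{unshifted} families. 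The paper does exactly that, first forcing $|\cA|=\binom{n-1}{k-1}$ and $|\cB|=\binom{n-1}{l-1}$ by strict inequalities, and then invoking the equality case of Kruskal--Katona on $\cA^c$.
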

To compare Theorems~\ref{thm-unif} and \ref{thm1} we note that
if $(\alpha,\beta)\in\Omega$ then $(k,l)\in\Omega'$, where
$k=\lfloor \alpha n\rfloor$ and $l=\lfloor \beta n\rfloor$,
provided $n>n_0(\alpha,\beta)$.
The condition \eqref{C1} is equivalent to 
$|\cA^{(k)}_0||\cB^{(l)}_0|<\binom{n-1}{k-1}\binom{n-1}{l-1}$.
If we assume \eqref{a=k/n}, then \eqref{C1} corresponds to 
\begin{align}\label{C1'}
(2-\alpha)\beta<1, 
\end{align}
which comes from \eqref{boundaryCi}
at $j=0$. Also the condition \eqref{C2} corresponds to
\begin{align}\label{C2'}
 (1-\alpha)\log\frac1{1-\beta}-(1-\beta)\log\frac1{\alpha}<0, 
\end{align}
which is a stronger requirement than \eqref{boundaryCi} for $j\geq 1$.
Let $\Delta'\subset \Omega$ be the set of $(\alpha,\beta)$ satisfying
\eqref{C1'} and \eqref{C2'}. Then $\Delta'\subset\Delta$ and 
$\Delta'$ is illustrated in Figure~\ref{fig3}
filled with light blue.

\begin{figure}[h]
\includegraphics[width=11cm]{./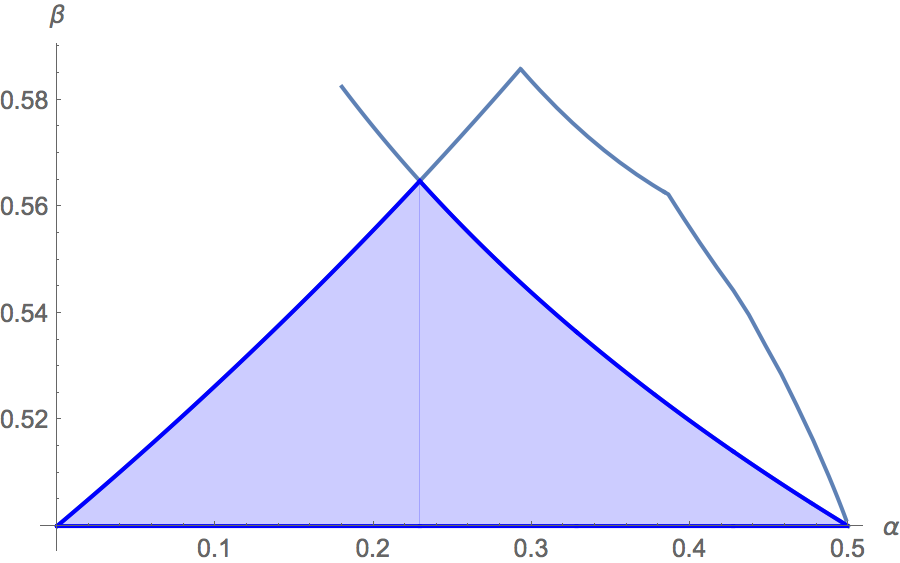}
\caption{The domain $\Delta'$}\label{fig3}
\end{figure} 

The rest of the paper is organized as follows. In Section~\ref{sec2}
we introduce our main tool based on the Kruskal--Katona Theorem.
In Section~\ref{sec3} we prove the first part of Theorem~\ref{thm-unif}
(the determination of $M(n,k,l)$), and in Section~\ref{sec4} we prove
the second part of Theorem~\ref{thm-unif} (the uniqueness of the extremal
structure). Then, in Section~\ref{sec5}, we prove Theorem~\ref{thm-p}
using Theorem~\ref{thm-unif}. In the last section we outline the proof
of Theorem~\ref{thm1}.

\section{Preliminaries}\label{sec2}
In this section we state a consequence of the Kruskal--Katona Theorem,
which is one of the main tools for the proof of our results.
For a real number $x$ and an integer $t$ with $x\geq t>0$ define
$\binom xt:=\frac{x(x-1)\cdots(x-t+1)}{t!}$. We also define
$\binom x0=1$ and $\binom x{t}=0$ if $x<t$. 
For given positive integers $m$ and $u$ we can represent $m$ in the form as
\begin{align}\label{u-cascade of m}
 m=\binom {a_u}u+\binom {a_{u-1}}{u-1}+\cdots+\binom {a_{u-t}}{u-t} 
\end{align} 
with $a_u>a_{u-1}>\cdots>a_{u-t}\geq u-t\geq 1$, 
where $t,a_u,\ldots,a_{u-t}$ are all integers. 
The representation is unique, and it is called the $u$-cascade form of $m$. 
For a family $\cF\subset\binom{[n]}u$ and an integer $v$ with $0<v<u$,
let us define the $v$-shadow of $\cF$ by 
\[
\sigma_v(\cF):=\{G\in\binom{[n]}v:G\subset F\text{ for some }F\in\cF\}. 
\]
If $|\cF|=m$ and its $u$-cascade form is given by \eqref{u-cascade of m},
then it follows from the Kruskal--Katona Theorem \cite{Kr, Ka} that
\[
 |\sigma_v(\cF)|\geq
\binom {a_u}v+\binom {a_{u-1}}{v-1}+\cdots+\binom {a_{u-t}}{v-t}.
\]
If, moreover, we choose an integer $s$ and a real number $x$ 
with $0<s<t$ and $a_{u-s-1}\leq x<a_{u-s}$ such that
\begin{align}\label{ux-cascade}
 m=\binom {a_u}u+\binom {a_{u-1}}{u-1}+\cdots+\binom {a_{u-s}}{u-s} 
+\binom{x}{u-s-1}, 
\end{align}
in other words, if we choose $s$ and $x$ so that
\[
 \binom {a_{u-s-1}}{u-s-1}+\cdots+\binom {a_{u-t}}{u-t} =\binom x{u-s-1},
\]
then it follows from the Lov\'asz version of the Kruskal--Katona Theorem 
\cite{Lo} that
\begin{align}\label{v-cascade}
 |\sigma_v(\cF)|\geq
\binom {a_u}v+\binom {a_{u-1}}{v-1}+\cdots+\binom {a_{u-s}}{v-s}
+\binom {x}{v-s-1}.
\end{align}
Katona also proved that if $|\cF|=\binom{a_u}u$ then 
$|\sigma_v(\cF)|=\binom{a_u}v$ holds if and only if $\cF\cong\binom{[a_u]}u$,
and we will use this fact in Section~\ref{sec4}. For more about
the structure of minimum shadows, see \cite{FG, Mors}. 

If $\cF\subset\binom{[n]}u$ and $\cG\subset\binom{[n]}v$ are cross-intersecting,
then $\sigma_v(\cF^c)\cap\cG=\emptyset$, where
$\cF^c:=\{[n]\setminus F:F\in\cF\}$. 
Thus $|\cG|\leq\binom nv-|\sigma_v(\cF^c)|$.
Noting that $|\cF|=|\cF^c|$ we have the following lemma, 
which we will use repeatedly in the proof of Theorem~\ref{thm-unif}.
\begin{lemma}\label{KK lemma}
Let $\cF\subset\binom{[n]}u$ and $\cG\subset\binom{[n]}v$ be cross-intersecting.
If $m=|\cF|$ is represented by \eqref{ux-cascade}, then
$|\cG|\leq\binom nv-(\text{the RHS of \eqref{v-cascade}})$.
\end{lemma}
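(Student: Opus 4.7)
My plan is to stitch together the two observations assembled in the paragraph preceding the lemma: a complement-based reformulation of cross-intersection as a shadow-avoidance condition, and the Lov\'asz form \eqref{v-cascade} of the Kruskal--Katona theorem. First I would establish the shadow-avoidance step: if $G\in\sigma_v(\cF^c)$ then $G\subset[n]\setminus F$ for some $F\in\cF$, so $F\cap G=\emptyset$, and cross-intersection forces $G\notin\cG$. This gives $\cG\subset\binom{[n]}{v}\setminus\sigma_v(\cF^c)$, hence
\[
|\cG|\leq\binom{n}{v}-|\sigma_v(\cF^c)|.
\]

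Next I would bound $|\sigma_v(\cF^c)|$ from below by Kruskal--Katona. Since $|\cF^c|=|\cF|=m$ and $\cF^c\subset\binom{[n]}{n-u}$, expressing $m$ as the cascade sum attached to the rank of $\cF^c$ in the shape of \eqref{ux-cascade} and invoking the Lov\'asz strengthening yields exactly the right-hand side of \eqref{v-cascade} as a lower bound for $|\sigma_v(\cF^c)|$. Substituting this into the previous display delivers the stated bound on $|\cG|$.

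I do not anticipate a genuine obstacle: the lemma is essentially a bookkeeping wrapper around the argument the author already sketched. The only point that requires a moment of care is purely notational, namely that the cascade form \eqref{ux-cascade} is to be read as the cascade of $m$ viewed as the size of $\cF^c$ in the rank $n-u$; the author's reminder that $|\cF|=|\cF^c|$ is precisely what legitimizes this reading, and the coefficients $a_u,a_{u-1},\dots$ and the real number $x$ appearing in the conclusion are those of this complementary cascade. The real work, of course, is saved for the later sections, where one has to choose an appropriate cascade truncation for each family encountered in the induction on $n,k,l$.
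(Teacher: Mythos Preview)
Your proposal is correct and follows exactly the approach the paper sketches in the paragraph immediately preceding the lemma: the shadow-avoidance inclusion $\cG\subset\binom{[n]}{v}\setminus\sigma_v(\cF^c)$ together with the Lov\'asz form of Kruskal--Katona applied to $\cF^c$. You have also correctly flagged the one notational point that the paper leaves implicit, namely that the cascade \eqref{ux-cascade} is to be read in the rank $n-u$ of $\cF^c$, which is confirmed by how the lemma is actually invoked (e.g., in Claim~\ref{easy case}).
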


Let $(k,l)\in\Omega'$, and let
$\cA\subset\binom{[n]}k$ and $\cB\subset\binom{[n]}l$ be cross-intersecting.
To prove $|\cA||\cB|\leq\binom{n-1}{k-1}\binom{n-1}{l-1}$ we may assume that
$|\cA|>\binom{n-1}{k-1}$ or $|\cB|>\binom{n-1}{l-1}$.
We can settle the latter case easily.

\begin{claim}\label{easy case}
If $|\cB|>\binom{n-1}{l-1}$ then $|\cA||\cB|<\binom{n-1}{k-1}\binom{n-1}{l-1}$.
\end{claim}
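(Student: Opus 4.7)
I would apply Lemma~\ref{KK lemma} to $\cF=\cB^c\subset\binom{[n]}{n-l}$ and $\cG=\cA$, which is legitimate because $k<n-l$ follows from the $\Omega'$ hypothesis $k+l<n$. Since $|\cB^c|=|\cB|>\binom{n-1}{n-l}$, the $s=0$ case of \eqref{ux-cascade} allows one to write $|\cB|=\binom{n-1}{n-l}+\binom{x}{n-l-1}$ for a unique real $x\in[n-l-1,n-1)$, and Lemma~\ref{KK lemma} then yields $|\cA|\le\binom{n-1}{k-1}-\binom{x}{k-1}$. It therefore suffices to establish that
\[
h(x):=\Bigl(\binom{n-1}{k-1}-\binom{x}{k-1}\Bigr)\Bigl(\binom{n-1}{n-l}+\binom{x}{n-l-1}\Bigr)<\binom{n-1}{k-1}\binom{n-1}{n-l}
\]
for every $x\in[n-l-1,n-1)$.

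I would proceed in two steps. First, at the left endpoint $x=n-l-1$ one has $\binom{x}{n-l-1}=1$, and the claim reduces after an algebraic simplification to
\[
\binom{n-1}{k-1}<\binom{n-l-1}{k-1}\Bigl(\binom{n-1}{n-l}+1\Bigr),
\]
which is immediate from the $\Omega'$ hypotheses: $k<n-l$ and $l>n/2$ together give $k-1\le n-l-2\le l-2$, so $\binom{n-1}{k-1}\le\binom{n-1}{l-1}=\binom{n-1}{n-l}$, while $\binom{n-l-1}{k-1}\ge 1$. Second, I would show that $h$ is strictly decreasing on $[n-l-1,n-1)$; using the identity $\tfrac{d}{dx}\log\binom{x}{r}=\psi_r(x):=\sum_{i=0}^{r-1}(x-i)^{-1}$, the condition $h'(x)<0$ becomes
\[
\frac{\binom{x}{n-l-1}\,\psi_{n-l-1}(x)}{\binom{n-1}{n-l}+\binom{x}{n-l-1}}<\frac{\binom{x}{k-1}\,\psi_{k-1}(x)}{\binom{n-1}{k-1}-\binom{x}{k-1}},
\]
which has ample slack throughout the interval because $k-1<n-l-1$ keeps $\binom{x}{k-1}/\binom{x}{n-l-1}$ comparable while the right-hand denominator tends to zero as $x\to n-1$.

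The main obstacle I anticipate is not conceptual but the clean uniform verification of this logarithmic derivative inequality across the full interval, which requires careful bookkeeping to rule out any local maximum of $h$ strictly inside. A reliable alternative, should the calculus prove awkward, is to work entirely in integers: write $|\cB|=\binom{n-1}{l-1}+t$ with $1\le t\le\binom{n-1}{n-l-1}$, expand $t$ in its $(n-l-1)$-cascade, and bound $|\cA||\cB|$ term by term using the integer form of Kruskal--Katona. Either approach reduces Claim~\ref{easy case} to a transparent binomial inequality guaranteed by the $\Omega'$ constraints.
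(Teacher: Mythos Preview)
Your setup is exactly the paper's: write $|\cB|=\binom{n-1}{n-l}+\binom{x}{n-l-1}$ with $n-l-1\le x\le n-1$, apply Lemma~\ref{KK lemma} to get $|\cA|\le\binom{n-1}{k-1}-\binom{x}{k-1}$, and then bound the product $h(x)$. The divergence is in how $h(x)<\binom{n-1}{k-1}\binom{n-1}{l-1}$ is established.

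Your plan has a genuine gap at the monotonicity step. You assert that $h$ is strictly decreasing on $[n-l-1,n-1)$, but the justification offered (``ample slack \ldots\ the right-hand denominator tends to zero as $x\to n-1$'') only addresses the behaviour near the right endpoint; near $x=n-l-1$ both denominators are bounded away from zero and the inequality between the two logarithmic derivatives is not at all obvious. Note also that $\psi_{k-1}(x)<\psi_{n-l-1}(x)$ for every $x$ in the range (since $k-1<n-l-1$), which works against you, not for you. In the boundary case $k=1$ one has $\binom{x}{k-1}\equiv 1$, so $h$ is constant (identically $0$) and ``strictly decreasing'' is literally false; your argument would need a separate treatment here. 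So the monotonicity route, even if ultimately salvageable for $k\ge 2$, requires real work that you have not supplied.

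The paper bypasses monotonicity entirely. After the identical setup it observes that $h(x)<\binom{n-1}{k-1}\binom{n-1}{l-1}$ is implied by the single inequality
\[
\binom{n-1}{l-1}\binom{x}{k-1}>\binom{n-1}{k-1}\binom{x}{n-l-1},
\]
since expanding $h(x)-\binom{n-1}{k-1}\binom{n-1}{l-1}$ gives $\binom{n-1}{k-1}\binom{x}{n-l-1}-\binom{n-1}{l-1}\binom{x}{k-1}$ minus a nonnegative term. This inequality is then rewritten as
\[
(n-k)(n-k-1)\cdots l>(x-k+1)(x-k)\cdots(x-n+l+2)\,(n-l),
\]
both sides being products of $n-k-l+1$ factors, and verified by a term-by-term comparison: $n-k-i+1\ge x-k-i+2$ for $1\le i\le n-l-k$ (because $x\le n-1$), and $l>n-l$ for the last factor (because $l>n/2$). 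This is a two-line argument with no calculus and no endpoint analysis; your endpoint check and fallback cascade expansion are both unnecessary once one sees this reduction.
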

\begin{proof}
Let $|\cB|=\binom{n-1}{n-l}+\binom x{n-l-1}$ for $n-l-1\leq x\leq n-1$. Then,
by Lemma~\ref{KK lemma},
$|\cA|\leq\binom nk-\binom{n-1}k-\binom x{k-1}=\binom{n-1}{k-1}-\binom x{k-1}$. 
To prove $|\cA||\cB|<\binom{n-1}{k-1}\binom{n-1}{l-1}$ it suffices to show that
$\binom{n-1}{l-1}\binom x{k-1}>\binom{n-1}{k-1}\binom x{n-l-1}$.
This is equivalent to
\[
 (n-k)\cdots l>(x-k+1)\cdots(x-n+l+2)(n-l).
\]
Both sides consist of $n-k-l+1$ products, and by comparing the corresponding
terms one by one we see that $n-k-i+1\geq x-k-i+2$ for $1\leq i\leq n-l-k$, 
and $l>n-l$.
\end{proof}

Note that we did not use any of \eqref{boundaryCi}, \eqref{C1'}, or 
\eqref{C2'} to prove Claim~\ref{easy case}. Thus Claim~\ref{easy case}
is valid for both Theorems~\ref{thm-unif} and \ref{thm1}.

\section{Proof of Theorem~\ref{thm-unif}: The inequality}\label{sec3}
In this section we prove $M(n,k,l)=\binom{n-1}{k-1}\binom{n-1}{l-1}$
under the premises of Theorem~\ref{thm-unif}. In short, we just apply
Lemma~\ref{KK lemma} several times according to the size of $\cA$, and
compute the corresponding upper bounds. 
However this task is rather involved because we need to choose the interval 
for the size of $\cA$ carefully to obtain the right bound.

Fix $(\alpha,\beta)\in\Delta$.
Recall that $\alpha$ and $\beta$ satisfy \eqref{boundaryCi} 
for all $j\geq 0$, so we have \eqref{b<ei}.
For every fixed $0<\alpha<1/2$ we have $e_j\to 1-\alpha$ as $j\to\infty$. 
Solving $e_0=e_1$ for $\alpha$ we get 
$\alpha=1-\frac1{\sqrt{2}}=:\tilde\alpha$. In this case 
$e_0=e_1=2\tilde\alpha=:\tilde\beta$.
Thus, by \eqref{b<ei}, we have
\begin{align}\label{beta<tilde beta}
\beta\leq\min\{e_0,e_1\}\leq\tilde\beta.   
\end{align}
Indeed there is a cusp at $(\alpha,\beta)=(\tilde\alpha,\tilde\beta)$ 
in Figure~\ref{fig2}. Let $\aa:=1-\alpha$ and $\bb:=1-\beta$.

We will often use the following fact without mentioning of it.
\begin{claim}\label{claim:gamma}
Let $k,l,n\in\N$ with
$k=\lfloor\alpha n\rfloor$, $l=\lfloor\beta n\rfloor$.
For fixed $0<t\leq s$ with $t\in\N, s\in\R$ it follows that
\[
 \binom{n-s}{n-k-t}\bigg/\binom nk\to\alpha^{s-t}\aa^{t},\qquad
 \binom{n-s}{l-t}\bigg/\binom nl\to\beta^t\bb^{s-t}
\]
as $n\to\infty$. 
\end{claim}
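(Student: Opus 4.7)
The claim is a standard asymptotic evaluation of two binomial-coefficient ratios. My plan is, for each of the two limits, to split the ratio into a ``top-shift'' factor and a ``bottom-shift'' factor, evaluate each limit separately using either $\Gamma$-function asymptotics or the $\log(1-x)$ expansion, and then multiply.

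For the first limit I will use $\binom{n}{k}=\binom{n}{n-k}$ and decompose
\[
\frac{\binom{n-s}{n-k-t}}{\binom{n}{k}}
=\frac{\binom{n-s}{n-k-t}}{\binom{n}{n-k-t}}
\cdot\frac{\binom{n}{n-k-t}}{\binom{n}{n-k}}.
\]
The second factor equals $\prod_{i=1}^{t}(n-k-i+1)/(k+i)$ and tends to $(\aa/\alpha)^{t}$. The first factor, using the definition of $\binom{x}{t}$ for real $x$ from the beginning of Section~\ref{sec2}, equals
\[
\prod_{i=0}^{n-k-t-1}\frac{n-s-i}{n-i}
=\frac{\Gamma(n-s+1)\,\Gamma(k+t+1)}{\Gamma(n+1)\,\Gamma(k+t-s+1)},
\]
and the standard asymptotic $\Gamma(x+a)/\Gamma(x+b)\sim x^{a-b}$ as $x\to\infty$ yields the limit $n^{-s}\cdot(\alpha n)^{s}\to\alpha^{s}$. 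Multiplying the two limits produces $\alpha^{s-t}\aa^{t}$, as required.

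For the second limit I will split analogously,
\[
\frac{\binom{n-s}{l-t}}{\binom{n}{l}}
=\frac{\binom{n-s}{l-t}}{\binom{n}{l-t}}\cdot\frac{\binom{n}{l-t}}{\binom{n}{l}}.
\]
The right-hand factor equals $\prod_{i=1}^{t}(l-i+1)/(n-l+i)\to(\beta/\bb)^{t}$. For the left-hand factor, I will write it as $\prod_{i=0}^{l-t-1}\bigl(1-s/(n-i)\bigr)$ and take logarithms; using $\log(1-x)=-x+O(x^{2})$ together with the bound $n-i\geq n-l\sim\bb n$, the quadratic error sums to $O(1/n)$, and the main term reduces to $-s\,(H_{n}-H_{n-l+t})$, where $H_{n}$ is the $n$th harmonic number. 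Since $H_{n}-H_{n-l+t}\to\log(1/\bb)$, this factor converges to $\bb^{s}$, and combining yields $\beta^{t}\bb^{s-t}$.

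Both computations are routine; the only mild subtlety I anticipate is that $s$ is permitted to be real rather than integral, which rules out naively cancelling factorials. This is precisely what the $\Gamma$-function asymptotic (for the first limit) and the $\log(1-x)$ expansion with the harmonic-sum estimate (for the second limit) are designed to handle, and both work uniformly in $s$ as $n\to\infty$. I do not expect any substantive obstacle.
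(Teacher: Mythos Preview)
Your proof is correct and follows essentially the same approach as the paper: both arguments reduce the ratio to a product of $\Gamma$-function quotients and invoke the standard asymptotic $\Gamma(x+a)/\Gamma(x)\sim x^{a}$ to handle the real shift $s$. The only cosmetic difference is that the paper writes each ratio directly as a single product of three $\Gamma$-ratios, while you first insert an intermediate binomial to separate a ``top-shift'' from a ``bottom-shift'' factor and, for the second limit, replace the $\Gamma$-asymptotic by the equivalent $\log(1-x)$/harmonic-sum computation.
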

\begin{proof}
Recall the following basic properties of the Gamma function:
\[
 \binom xn=\frac{\Gamma(x+1)}{\Gamma(n+1)\Gamma(x-n+1)}
\]
for $x\in\R$ with $x\geq n$, and
\[
 \lim_{n\to\infty}\frac{\Gamma(y+n)}{n^y\Gamma(n)}=1
\]
for $y\in\R$.
Thus we have
\begin{align*}
\lim_{n\to\infty} \binom{n-s}{n-k-t}\bigg/\binom nk&=
\lim_{n\to\infty}
\frac{\Gamma(n-s+1)}{\Gamma(n+1)}
\frac{\Gamma(k+1)}{\Gamma(k-s+t+1)}
\frac{\Gamma(n-k+1)}{\Gamma(n-k-t+1)}\\
&=\lim_{n\to\infty}n^{-s} k^{s-t} (n-k)^t= \alpha^{s-t}\aa^t.
\end{align*} 
One can prove the second item similarly.
\end{proof}

The goal of this section is to show the following lemma.
\begin{lemma}\label{main lemma}
Let $k,l,n$ satisfy the premises of Theorem~\ref{thm-unif}.
Suppose that $\cA\subset\binom{n}k$ and $\cB\subset\binom{[n]}l$ are
cross-intersecting. Then $|\cA||\cB|<\binom{n-1}{k-1}\binom{n-1}{l-1}$
unless $|\cA|=\binom{n-1}{k-1}$ and $|\cB|=\binom{n-1}{l-1}$.
\end{lemma}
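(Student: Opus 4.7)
The plan is to apply Lemma~\ref{KK lemma} range-by-range, with the ranges of $|\cA|$ delimited by the sizes $|\cA_j^{(k)}|$ of the conjecturally extremal near-star families, and with the asymptotic form \eqref{boundaryCi} of the hypothesis supplying the bound on each piece.

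First I would invoke Claim~\ref{easy case} to reduce to the case $|\cB|\leq\binom{n-1}{l-1}$, leaving only the situation $|\cA|>\binom{n-1}{k-1}$. Set $m:=|\cA|=|\cA^c|$ with $\cA^c\subset\binom{[n]}{n-k}$. The sequence $|\cA_0^{(k)}|>|\cA_1^{(k)}|>\cdots$ decreases to $\binom{n-1}{k-1}$, and I would consider the intervals $m\in[|\cA_{j+1}^{(k)}|,|\cA_j^{(k)}|]$ for each admissible $j\geq 0$, together with the outer interval $m>|\cA_0^{(k)}|$. On the $j$-th interval, the natural $(n-k)$-cascade of $m$ takes the form $\binom{n-1}{n-k}+\binom{x}{n-k-1}$ with real $x\in[n-j-3,n-j-2]$, so Lemma~\ref{KK lemma} gives $|\cB|\leq\binom{n-1}{l-1}-\binom{x}{l-1}$.

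Setting $f(x):=\left(\binom{n-1}{k-1}+\binom{x}{n-k-1}\right)\left(\binom{n-1}{l-1}-\binom{x}{l-1}\right)$, the remaining task is to verify $f(x)<\binom{n-1}{k-1}\binom{n-1}{l-1}$ throughout each interval. At the integer endpoint $x=n-j-2$, $f$ equals $|\cA_j^{(k)}||\cB_j^{(l)}|$, whose normalization by $\binom{n-1}{k-1}\binom{n-1}{l-1}$ converges by Claim~\ref{claim:gamma} to $(1+(1-\alpha)\alpha^j)(1-(1-\beta)^{j+1})$, which is strictly below $1$ by the hypothesis $(\alpha,\beta)\in\Delta$. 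Hence the endpoint inequality holds for $n$ sufficiently large, and propagation to the interior requires a monotonicity or convexity check on $f(x)$. A uniform-in-$j$ argument is then needed because the number of admissible $j$ grows with $n$: for $j$ beyond a threshold depending only on $\alpha$ (using $e_j\to 1-\alpha>\beta$ with a uniform gap coming from \eqref{b<ei} and $\alpha+\beta<1$), the correction terms $\binom{n-j-2}{n-k-1}$ and $\binom{n-j-2}{l-1}$ are exponentially small and the inequality becomes essentially trivial.

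The principal obstacle will be the within-interval analysis of $f(x)$: since $|\cA|$ increases and $|\cB|$ decreases in $x$, the product is not manifestly monotone, so one must either compute $f'(x)$ to rule out an interior maximum or replace the single-parameter cascade by a finer cascade sharp on the whole interval. A secondary difficulty is the outer region $m>|\cA_0^{(k)}|$, where a three-term cascade $\binom{n-1}{n-k}+\binom{n-2}{n-k-1}+\binom{y}{n-k-2}$ produces a stronger bound on $|\cB|$ and the argument must be recycled with an extra parameter $y$ playing the role of $x$. Combining these piecewise strict inequalities will yield $|\cA||\cB|<\binom{n-1}{k-1}\binom{n-1}{l-1}$ whenever $|\cA|>\binom{n-1}{k-1}$, which together with the reduction above completes the proof of the lemma.
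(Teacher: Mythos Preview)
Your overall strategy matches the paper: reduce via Claim~\ref{easy case}, slice the range $|\cA|>\binom{n-1}{k-1}$ according to the sizes $|\cA_j^{(k)}|$, apply Lemma~\ref{KK lemma} on each piece, and note that beyond a threshold depending only on $\alpha$ the pieces become easy. This threshold is exactly the paper's constant $i_0$ from Claim~\ref{claim:i0}, and the outer region $m>|\cA_0^{(k)}|$ is indeed handled with longer cascades as you say.

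The genuine gap is the finitely many ``hard'' pieces with small $j$. You propose to cover each with the two-term form $m=\binom{n-1}{n-k}+\binom{x}{n-k-1}$ and then control the interior of the interval by a monotonicity or convexity check on $f$. The paper does use this two-term form, but only on the \emph{innermost} region (Claims~\ref{claimC} and \ref{claimC-app}); the sufficient condition it extracts from the critical-point analysis there, namely \eqref{ineq:claimC}, is precisely what fails for small $i$ and forces the introduction of $i_0$. For the remaining pieces the paper must refine to a three-term cascade $\binom{n-1}{n-k}+\binom{n-i}{n-k-1}+\binom{x}{n-k-2}$ (Claims~\ref{claimA} and \ref{claimA-app}) and, for $j=0,1$, even to a four-term cascade (Claims~\ref{claimB} and \ref{claimB-app}), with a further case split near $\alpha\approx 0.27$. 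So the deeper cascades are needed throughout the small-$j$ range, not only in the outer region. Moreover, the interior analysis is not a convexity argument: in each of Claims~\ref{claimA}, \ref{claimB}, \ref{claimC} the paper assumes an interior critical point $y$, uses $f'(y)=0$ to eliminate one of the products in $f(y)$ and obtain an auxiliary function $g(y)$, shows $g$ is increasing, and checks that $g$ at the right endpoint violates a specific asymptotic inequality (\eqref{cond2}, \eqref{ineq:claimB}, or \eqref{ineq:claimC}) that is then verified against the boundary curves $\beta=e_j$. Your plan correctly locates where the difficulty lies but supplies neither the correct cascade depth for small $j$ nor this elimination device.
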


The case $|\cB|>\binom{n-1}{l-1}$ is shown in Claim~\ref{easy case}.
So from now on we assume that $|\cA|>\binom{n-1}{k-1}$.
We will apply Lemma~\ref{KK lemma} with $\cF=\cA$ and $\cG=\cB$
to show that the upper bound for $|\cA||\cB|$ from the lemma is less than
$\binom{n-1}{k-1}\binom{n-1}{l-1}$.
We break the proof into several cases according to the size of $\cA$.
To this end we introduce a constant $i_0$ based on the following fact.
\begin{claim}\label{claim:i0}
There exists a constant $i_0=i_0(\alpha)$ such that
\begin{align}\label{ineq:large i}
(1+\alpha^{i-2}\aa)\log\frac1{1-e_{i-2}}<\log\frac1{\alpha}
\end{align}
holds for all $i\geq i_0$.
\end{claim}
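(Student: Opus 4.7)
The plan is to substitute the closed form \eqref{eq:ej} for $e_{i-2}$ into \eqref{ineq:large i} and then perform an asymptotic analysis in $i$. Both sides of \eqref{ineq:large i} grow linearly in $i$ with common leading coefficient $\log\frac1{\alpha}$, and the $O(1)$ residue will be seen to reduce in the limit to $\log\frac1{\alpha}-\log\frac1{1-\alpha}$, which is positive exactly because $\alpha<\frac12$ throughout $\Omega$.

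Explicitly, \eqref{eq:ej} with $j=i-2$ gives $1-e_{i-2}=\bigl(\alpha^{i-2}\aa/(1+\alpha^{i-2}\aa)\bigr)^{1/(i-1)}$, so $\log\frac1{1-e_{i-2}}=\frac1{i-1}\log\frac{1+x_i}{x_i}$ where $x_i:=\alpha^{i-2}\aa$. Clearing the factor $i-1$ in \eqref{ineq:large i} and writing $\log\frac{1+x_i}{x_i}=\log(1+x_i)+(i-2)\log\frac1{\alpha}-\log\aa$, the term $(1+x_i)(i-2)\log\frac1{\alpha}$ on the left combines with $(i-1)\log\frac1{\alpha}$ on the right, and \eqref{ineq:large i} becomes the equivalent inequality
\[
(1+x_i)\log(1+x_i)-(1+x_i)\log\aa<\bigl[1-(i-2)x_i\bigr]\log\tfrac1{\alpha}.
\]

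Since $0<\alpha<\frac12$ we have $x_i=\alpha^{i-2}\aa\to 0$ exponentially, so both $(i-2)x_i\to 0$ and $(1+x_i)\log(1+x_i)\to 0$. Hence the left side tends to $-\log\aa=\log\frac1{1-\alpha}$ while the right side tends to $\log\frac1{\alpha}$. The hypothesis $\alpha<\frac12$ gives $1-\alpha>\alpha$ and thus $\log\frac1{1-\alpha}<\log\frac1{\alpha}$ strictly, and by continuity there exists $i_0=i_0(\alpha)$ beyond which \eqref{ineq:large i} holds. The only real obstacle is the linear-in-$i$ cancellation leading to the displayed inequality: both sides of the original statement are individually of order $\log\frac1{\alpha}\cdot i$, so the decisive $O(1)$ comparison is invisible unless one cancels these contributions exactly; I would therefore carry out the algebraic rearrangement explicitly before passing to the limit.
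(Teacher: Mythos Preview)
Your argument is correct, and in fact cleaner than the paper's. One small expository slip: the sentence ``Both sides of \eqref{ineq:large i} grow linearly in $i$'' is not literally true of \eqref{ineq:large i} itself --- the right side is constant and the left side converges --- but you obviously mean the equivalent inequality obtained after multiplying through by $i-1$, and the formal work below is unambiguous and correct.

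The paper takes a different route. Rather than expanding $\log\frac1{1-e_{i-2}}$ via the closed form for $e_{i-2}$, it argues indirectly: it differentiates the left side of \eqref{ineq:large i} with respect to $i$ (treated as a real variable), shows the derivative is eventually positive, and then observes that the left side tends to $\log\frac1{\alpha}$ as $i\to\infty$ because $e_{i-2}\to 1-\alpha$. A function that is eventually strictly increasing and converges to a limit must lie strictly below that limit, which gives \eqref{ineq:large i} for large $i$. Your approach avoids calculus entirely: by clearing $i-1$ and cancelling the common $(i-2)\log\frac1{\alpha}$ contribution, you reduce the question directly to the comparison $\log\frac1{1-\alpha}<\log\frac1{\alpha}$, which is immediate from $\alpha<\frac12$. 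This is more elementary and makes the role of the hypothesis $\alpha<\frac12$ completely transparent; the paper's proof hides the same comparison inside the sign of $\log(\alpha^{-1}-1)$ in the limit of the derivative.
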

\begin{proof}
By differentiating the LHS of \eqref{ineq:large i} with respect to $i$ we get 
$\frac1{\alpha^2 (i-1)^2}h(i)$, where 
\[
\lim_{i\to\infty}h(i)=\alpha^2\log(\alpha^{-1}-1)>0
\]
for $0<\alpha<\frac12$. Thus the LHS of \eqref{ineq:large i} is increasing in
$i$ provided $i$ sufficiently large. Moreover, using $e_i\to 1-\alpha$ as
$i\to\infty$, it follows that the LHS of \eqref{ineq:large i} 
goes to $\log\frac1{\alpha}$ as $i\to\infty$. This means that 
\eqref{ineq:large i} holds if $i$ is large enough.
\end{proof}
We fix an integer $i_0$ from Claim~\ref{claim:i0}. Note that the $i_0$ is
independent of $n$.
Then we consider the following four cases separately:
\begin{itemize}
\item $\binom{n-1}{n-k}<|\cA|\leq\binom{n-1}{n-k}+\binom{n-i_0}{n-k-1}$
\item $\binom{n-1}{n-k}+\binom{n-i_0}{n-k-1}<|\cA|\leq
\binom{n-1}{n-k}+\binom{n-3}{n-k-1}$.
\item $\binom{n-1}{n-k}+\binom{n-3}{n-k-1}<|\cA|\leq
\binom{n-1}{n-k}+\binom{n-2}{n-k-1}$.
\item $\binom{n-1}{n-k}+\binom{n-2}{n-k-1}<|\cA|$.
\end{itemize}

We will often use the following easy observation.

\begin{lemma}\label{lemma:limit}
Let $0<\alpha<1$ be a fixed real number, and
let $n,k$ be positive integers with $k=\lfloor \alpha n\rfloor$.
Let $\{\cF_n\}$ be a sequence of families with $\cF_n\subset\binom{[n]}k$,
and let $\{c_n\}$ be a sequence of real numbers with $0<c_n<1$. Suppose that
$ \lim_{n\to\infty}{|\cF_n|}/{\binom nk}<\lim_{n\to\infty} c_n$.
Then there exists an $n_0$ such that 
${|\cF_n|}/{\binom nk}< c_n$ for all $n>n_0$.
\end{lemma}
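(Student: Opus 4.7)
The plan is to reduce this to the elementary fact that if two convergent sequences of real numbers have strict inequality in the limit, then the inequality holds termwise from some point on. Set $a_n := |\cF_n|/\binom{n}{k}$ and $b_n := c_n$, and let $L_1 := \lim_{n\to\infty} a_n$ and $L_2 := \lim_{n\to\infty} b_n$, which exist by hypothesis and satisfy $L_1 < L_2$.

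First I would choose $\varepsilon := (L_2-L_1)/3 > 0$. By the definition of the limit applied to $\{a_n\}$, there exists $n_1$ such that $a_n < L_1 + \varepsilon$ for all $n > n_1$. Similarly there exists $n_2$ such that $b_n > L_2 - \varepsilon$ for all $n > n_2$. Setting $n_0 := \max\{n_1, n_2\}$, for every $n > n_0$ we have
\[
a_n < L_1 + \varepsilon = \frac{2L_1 + L_2}{3} < \frac{L_1 + 2L_2}{3} = L_2 - \varepsilon < b_n,
\]
which is exactly the desired conclusion $|\cF_n|/\binom{n}{k} < c_n$.

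Since the statement is essentially a direct reformulation of the definition of convergence, there is no real obstacle. The only thing worth remarking is that the hypothesis implicitly asserts the existence of both limits; no convexity, monotonicity, or uniformity of the sequence $\{\cF_n\}$ is needed, and the combinatorial role of $\binom{n}{k}$ and $\cF_n \subset \binom{[n]}{k}$ is merely that it guarantees $a_n$ is a well-defined bounded real sequence. The lemma will then be invoked later in the paper to turn asymptotic estimates (limits of ratios like $|\cA_n|/\binom{n}{k}$ that tend to measures $\mu_\alpha(\cdot)$, as in Claim~\ref{claim:gamma}) into honest finite-$n$ inequalities valid for all sufficiently large $n$.
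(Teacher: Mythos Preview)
Your proof is correct; it is the standard $\varepsilon$-argument for strict inequality of limits. The paper itself does not prove this lemma at all --- it is introduced as an ``easy observation'' and left to the reader --- so your write-up simply supplies the routine details that the paper omits.
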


Now we prove several inequalities (Claims~\ref{claimA}--\ref{claimC-app})
which we need for the proof of Lemma~\ref{main lemma}. We mention that we 
could prove Claims~\ref{claimA}, \ref{claimB}, and \ref{claimC} in a unified 
way, but to make the description simpler we prove them separately.
We also mention that the main idea of the proof of these claims is
taken from \cite{MT}.
After proving these claims Lemma~\ref{main lemma} will follow easily.

Here we recall our assumption throughout this section:
$(\alpha,\beta)\in\Delta$, $k=\lfloor\alpha n\rfloor$, and
$l=\lfloor\beta n\rfloor$.

\begin{claim}\label{claimA}
Let $i\geq 2$ and $\epsilon\geq 0$ be fixed integers.
Let $X=\binom{n-1}{n-k}+\binom{n-i}{n-k-1}$,
$Y=\binom{n-1}{l-1}-\binom{n-i}{l-1}$, and define a polynomial $F(x)$ by
\[
 F(x):=\left(X+\binom x{n-k-2}\right)\left(Y-\binom x{l-2}\right).
\] 
Let $M:=\max\{XY,F(n-i-\epsilon)\}$.
Then there exists $n_0=n_0(i,\epsilon)$ such that for all $n>n_0$ the following
holds: if 
\begin{align}\label{cond2}
\frac{1-\bb^{i-1}}{\beta\bb^{i-2+\epsilon}}\log\frac1{\bb}\leq
\frac{1+\alpha^{i-2}\aa}{\alpha^{i-3+\epsilon}\aa^2}\log\frac1{\alpha},
\end{align}
then 
\begin{align}\label{ineq:A}
F(x)<M
\end{align}
for all $x$ with $n-k-3<x<n-i-\epsilon$.
\end{claim}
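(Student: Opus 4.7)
The plan is to treat $F(x)$ as a real-analytic function on the closed interval $[n-k-3,\,n-i-\epsilon]$ and to show that $F(x)<M$ throughout the open subinterval by combining endpoint evaluations with a shape analysis of $F$. At the left endpoint, $\binom{n-k-3}{n-k-2}=0$ yields $F(n-k-3)=X\bigl(Y-\binom{n-k-3}{l-2}\bigr)$; since $k+l<n$ forces $\binom{n-k-3}{l-2}>0$ for $n$ large, this gives the strict bound $F(n-k-3)<XY\leq M$. At the right endpoint, $F(n-i-\epsilon)\leq M$ by the definition of $M$. So the task reduces to showing that $F$ attains its maximum on $[n-k-3,\,n-i-\epsilon]$ at one of these endpoints.

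Setting $g(x):=\log F(x)$, I would differentiate to obtain
\[
g'(x)=\frac{\binom{x}{n-k-2}\,H_{n-k-2}(x)}{X+\binom{x}{n-k-2}}-\frac{\binom{x}{l-2}\,H_{l-2}(x)}{Y-\binom{x}{l-2}},
\]
where $H_t(x):=\sum_{j=0}^{t-1}(x-j)^{-1}$. Near the left endpoint $\binom{x}{n-k-2}$ vanishes while the second fraction is strictly positive, so $g'(x)<0$ there. At the right endpoint, dividing numerators and denominators by $\binom{n}{k}\binom{n}{l}$ and invoking Claim~\ref{claim:gamma} together with the Riemann-sum asymptotics $H_{n-k-2}(n-i-\epsilon)\to\log(1/\alpha)$ and $H_{l-2}(n-i-\epsilon)\to\log(1/\bb)$, a short calculation shows that $\lim_{n\to\infty}g'(n-i-\epsilon)\geq 0$ is exactly the content of \eqref{cond2}. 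Lemma~\ref{lemma:limit} then promotes this to $g'(n-i-\epsilon)\geq 0$ for all $n>n_0(i,\epsilon)$.

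The crux is to show that $g'$ has at most one zero on the interval, so that $g$ is $U$-shaped with a single interior minimum. A direct route is to verify strict convexity of $g$: writing $g=\log A+\log B$ with $A=X+\binom{x}{n-k-2}$ and $B=Y-\binom{x}{l-2}$, one computes $(\log A)''(x)+(\log B)''(x)$ using $\log\binom{x}{t}=\sum_{j=0}^{t-1}\log(x-j)$ and shows positivity on the range $x<n-k$ by exploiting $n-k-2>l-2$ (which follows from $k+l<n$), so that the higher-degree polynomial $\binom{x}{n-k-2}$ dominates. Granted unimodality, the maximum of $F$ on the closed interval is attained at an endpoint, whence $F(x)\leq\max\{F(n-k-3),F(n-i-\epsilon)\}<\max\{XY,F(n-i-\epsilon)\}=M$ throughout the open interval; the strict inequality comes from $F(n-k-3)<XY$ and continuity.

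The main obstacle is precisely this unimodality step. A bare convexity bound may fail in the regime where $X$ is comparable to $\binom{x}{n-k-2}$, so a slightly more delicate comparison is needed. A robust alternative, in the spirit of \cite{MT}, is to sidestep second derivatives entirely: one shows directly that each of the two ratios in $g'$ is monotone in $x$, so once the first ratio overtakes the second the ordering persists, forcing the single sign change of $g'$. Combined with the asymptotic derivative computation, this yields the required unimodality and completes the proof.
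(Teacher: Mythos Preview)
Your outline has a genuine gap at the unimodality step, and a secondary mismatch with condition \eqref{cond2}.

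\textbf{The unimodality step is not justified.} You acknowledge that bare convexity of $\log F$ ``may fail,'' and your fallback---that both ratios $R_1,R_2$ are individually monotone---does not yield a single sign change of $g'=R_1-R_2$. Two increasing functions can cross any number of times; you would need something like monotonicity of $R_1/R_2$, or $R_1$ increasing and $R_2$ decreasing, and neither is established (indeed $H_{l-2}(x)$ is decreasing while $\binom{x}{l-2}$ is increasing, so even the monotonicity of $R_2$ is unclear). Without this, the argument does not close.

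\textbf{The endpoint derivative does not match \eqref{cond2}.} Since $g'=F'/F$, the sign of $g'(n-i-\epsilon)$ is the sign of $F'(n-i-\epsilon)=A'B+AB'$. Normalising and letting $n\to\infty$, both $A=X+\binom{x}{n-k-2}$ and $B=Y-\binom{x}{l-2}$ contribute the extra limit terms $\alpha^{i+\epsilon-2}\aa^2$ and $-\beta^2\bb^{i+\epsilon-2}$ respectively. The resulting limiting inequality is strictly stronger than \eqref{cond2}, so \eqref{cond2} alone does not give $g'(n-i-\epsilon)\ge 0$.

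\textbf{How the paper avoids both issues.} The paper first clears denominators: it sets
\[
f(x)=(F(x)-XY)\,\frac{(n-k-2)!}{\binom{x}{l-2}}
=-(n-k-2)!\,X+(l-2)!\,Y\!\!\prod_{j=l-2}^{n-k-3}(x-j)-\!\!\prod_{j=0}^{n-k-3}(x-j),
\]
a genuine polynomial (so no positivity of $F$ is needed), and reduces $F(x)<M$ to $f(x)<m$. If $f(y)>m$ at an interior point, then at a critical point $f'(y)=0$ one solves for the highest-degree product $\prod_{j=0}^{n-k-3}(y-j)$ and substitutes back into $f$. This eliminates precisely the cross term that caused your extra contributions, and yields a function
\[
g_{\text{paper}}(y)=-(n-k-2)!\,X+(l-2)!\,Y\!\!\prod_{j=l-2}^{n-k-3}(y-j)\,
\frac{\sum_{j=0}^{l-3}\tfrac1{y-j}}{\sum_{j=0}^{n-k-3}\tfrac1{y-j}}
\]
which is \emph{explicitly} increasing, because $(y-n+k+3)\sum_{j=0}^{l-3}\frac1{y-j}$ is a sum of increasing terms. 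Hence $g_{\text{paper}}(n-i-\epsilon)>0$, and the limit of this inequality is exactly the negation of \eqref{cond2}. No second-derivative or global shape information about $F$ is ever needed. The substitution-at-a-critical-point trick is the idea you are missing.
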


\begin{proof}
The inequality \eqref{ineq:A} is rewritten as
\[
\frac{F(x)-XY}{\binom x{l-2}}=
-X+Y\frac{\binom x{n-k-2}}{\binom x{l-2}}-\tbinom x{n-k-2}<
\frac{M-XY}{\binom x{l-2}}.  
\]
This is equivalent to $f(x)<(M-XY)\frac{(n-k-2)!}{\binom {x}{l-2}}$,
where $f(x)$ is defined by
\begin{align}
f(x)&:=(F(x)-XY)\frac{(n-k-2)!}{\binom x{l-2}}\nonumber\\ 
&=-(n-k-2)!\,X+(l-2)!\,Y\prod_{j=l-2}^{n-k-3}(x-j)-\prod_{j=0}^{n-k-3}(x-j). 
\label{def:f}
\end{align}
We actually prove a slightly stronger inequality
\begin{align}\label{def2:f}
 f(x)<(M-XY)\frac{(n-k-2)!}{\binom {n-i-\epsilon}{l-2}}=:m 
\end{align}
for $n-k-3<x<n-i-\epsilon$. Clearly it is
true at the two ends, indeed we have $f(n-k-3)=-(n-k-2)!X<0\leq m$, 
and $f(n-i-\epsilon)\leq m$ follows from the definition of $f(x)$ with 
$F(n-i-\epsilon)\leq M$.

We prove \eqref{def2:f} by contradiction. Suppose that there is some 
$y$ with $n-k-3<y<n-i-\epsilon$ such that
$f(y)>m$. Since $f(x)\leq m$ at the two ends we may assume that 
$\frac{d}{dy}f(y)=0$. This yields
\[
 \prod_{j=0}^{n-k-3}(y-j)=(l-2)!\,Y\prod_{j=l-2}^{n-k-3}(y-j)
\sum_{j=l-2}^{n-k-3}\frac1{y-j}\bigg/\left(\sum_{j=0}^{n-k-3}\frac1{y-j}\right).
\]
Substituting the RHS into the last term in \eqref{def:f} we get a new inequality
\begin{align*}
m< g(y):=
-(n-k-2)!\,X+(l-2)!\,Y\prod_{j=l-2}^{n-k-3}(y-j)
\sum_{j=0}^{l-3}\frac1{y-j}\bigg/\left(\sum_{j=0}^{n-k-3}\frac1{y-j}\right).
\end{align*} 
Since
\[
(y-n+k+3)\sum_{j=0}^{l-3}\frac1{y-j}=\frac{y-n+k+3}{y}+
\cdots+\frac{y-n+k+3}{y-l+3}
\]
is increasing in $y$, $g(y)$ is also increasing in $y$. 
So we must have $g(n-i-\epsilon)>m$.
We will show that this cannot happen.
Using
\begin{align*}
&\lim_{n\to\infty}{(n-k-2)!\,X}\,\frac{k!\, n^2}{n!}
=\frac{\alpha+\alpha^{i-1}\aa}{\aa^2},\\
&\lim_{n\to\infty}(l-2)!\,Y\prod_{j=l-2}^{n-k-3}(y-j)\,\frac{k!\, n^2}{n!}
=\frac{\beta-\beta\bb^{i-1}}{\beta^2}\left(\frac\alpha{\bb}\right)^{i+\epsilon-2},\\
&\lim_{n\to\infty}\sum_{j=0}^{l-3}\frac1{(n-i-\epsilon)-j}=
\lim_{n\to\infty}\int_{n-l-i+3-\epsilon}^{n-i+1-\epsilon}\frac1ydy=\log\frac1\bb,\\
&\lim_{n\to\infty}\sum_{j=0}^{n-k-3}\frac1{(n-i-\epsilon)-j}=
\lim_{n\to\infty}\int_{k-i+3-\epsilon}^{n-i+1-\epsilon}\frac1ydy=\log\frac1\alpha,
\end{align*}
it follows from $g(n-i-\epsilon)>m>0$ that
\begin{align*}
-\frac{\alpha+\alpha^{i-1}\aa}{\aa^2}+
\frac{\beta-\beta\bb^{i-1}}{\beta^2}\left(\frac\alpha{\bb}\right)^{i+\epsilon-2}
\log\frac1\bb\bigg/\log\frac1\alpha>0.
\end{align*}
However this reduces to the opposite inequality to \eqref{cond2},
a contradiction.
\end{proof}

\begin{claim}\label{claimA-app}
Let $i,\epsilon, X,Y,F(x)$ and $M$ be as in Claim~\ref{claimA}.
If one of the following holds, 
\begin{itemize}
\item[(i)] $i=2,3$ and $\epsilon=1$, 
\item[(ii)] $i=3$, $\epsilon=0$, and $\alpha<0.27$,
\item[(iii)] $i\geq 4$ and $\epsilon=0$, 
\end{itemize}
then for sufficiently large $n$ it follows
$F(x)<\binom{n-1}{k-1}\binom{n-1}{l-1}$ for $n-k-3<x<n-i-\epsilon$.
\end{claim}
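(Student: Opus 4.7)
The plan is to apply Claim~\ref{claimA}, which yields $F(x) < M := \max\{XY, F(n-i-\epsilon)\}$ on the interval $n-k-3 < x < n-i-\epsilon$ for sufficiently large $n$, provided the algebraic condition \eqref{cond2} holds. So in each of the three cases (i)--(iii) it suffices to verify (a) that \eqref{cond2} holds, and (b) that $M<\binom{n-1}{k-1}\binom{n-1}{l-1}$ for all sufficiently large $n$.

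For (b), I apply Claim~\ref{claim:gamma} to take limits as $n\to\infty$. Rewriting $\binom{n-i}{n-k-1}=\binom{n-i}{k-i+1}$ identifies $X$ and $Y$ with $|\cA^{(k)}_{i-2}|$ and $|\cB^{(l)}_{i-2}|$ respectively, so $XY/(\binom{n-1}{k-1}\binom{n-1}{l-1})$ tends to $\mu_\alpha(\cA_{i-2})\mu_\beta(\cB_{i-2})/(\alpha\beta)<1$ by \eqref{boundaryCi} at $j=i-2$, and Lemma~\ref{lemma:limit} then gives $XY<\binom{n-1}{k-1}\binom{n-1}{l-1}$ for large $n$. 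For the second term, Claim~\ref{claim:gamma} yields
\[
\frac{F(n-i-\epsilon)}{\binom{n}{k}\binom{n}{l}}\to(\alpha+\alpha^{i-1}\aa+\alpha^{i+\epsilon-2}\aa^2)(\beta-\beta\bb^{i-1}-\beta^2\bb^{i+\epsilon-2}).
\]
When $\epsilon=0$, the identities $\alpha+\aa=1$ and $\beta+\bb=1$ telescope this product to $\mu_\alpha(\cA_{i-3})\mu_\beta(\cB_{i-3})<\alpha\beta$ by \eqref{boundaryCi} at $j=i-3\geq 0$; this covers cases (ii) ($i=3$) and (iii) ($i\geq 4$). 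When $\epsilon=1$ (case (i)), the $\beta$-factor simplifies to $\beta^3$ at $i=2$ and to $\beta^2(1+\beta-\beta^2)$ at $i=3$, and I verify the resulting strict inequalities in $(\alpha,\beta)$ directly within $\Delta$ using \eqref{b<ei} at small $j$.

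For (a), in case (iii) with $i\geq i_0$ condition \eqref{cond2} follows from Claim~\ref{claim:i0} combined with $\bb\geq 1-e_{i-2}$ (supplied by $(\alpha,\beta)\in\Delta$) and $\alpha<\bb$ (from $\alpha+\beta<1$), after multiplying by the suitable power of $\alpha/\bb$ to transform \eqref{ineq:large i} into the required form. The finitely many residual $4\leq i<i_0$ in case (iii), together with the boundary cases (i) and (ii), reduce to explicit algebraic inequalities in $(\alpha,\beta)$ which I check on the appropriate subregions of $\Delta$; the threshold $\alpha<0.27$ in case (ii) is precisely the range where \eqref{cond2} with $i=3, \epsilon=0$ continues to hold across $\Delta$.

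The principal obstacle I anticipate is the explicit verification of step (a), and of the sharper $F(n-i-\epsilon)$ bound in step (b), in the boundary cases (i) and (ii). Neither is a consequence of a single instance of \eqref{boundaryCi}, and the delicate constant $0.27$ in case (ii) marks exactly where \eqref{cond2} ceases to be valid. The $\epsilon=1$ shift in case (i) is forced because the natural $\epsilon=0$ telescoping for $i=2$ would refer to the nonexistent $j=-1$ entry of \eqref{boundaryCi}; pushing $x$ one step back to $n-i-1$ introduces the additional $\beta$-factor $\bb^{i-1+\epsilon}/\bb^{i-1}=\bb$ that compensates for this boundary effect.
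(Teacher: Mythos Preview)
Your overall architecture is right and matches the paper: invoke Claim~\ref{claimA}, then separately verify (b) $M<\binom{n-1}{k-1}\binom{n-1}{l-1}$ via limits and (a) the analytic hypothesis \eqref{cond2}. Your handling of (b) is fine; in fact your telescoping of $F(n-i)$ to $\mu_\alpha(\cA_{i-3})\mu_\beta(\cB_{i-3})$ when $\epsilon=0$ is the correct computation.

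The gap is in step (a), specifically case (iii). Your proposed route through Claim~\ref{claim:i0} does not do what you claim. From \eqref{ineq:large i} and $\bb>1-e_{i-2}$ you get $(1+\alpha^{i-2}\aa)\log\tfrac1{\bb}<\log\tfrac1\alpha$, but to reach \eqref{cond2} with $\epsilon=0$ you would still need
\[
\frac{(1-\bb^{i-1})\,\alpha^{i-3}\aa^2}{\beta\,\bb^{i-2}(1+\alpha^{i-2}\aa)}\;\le\;1+\alpha^{i-2}\aa,
\]
and ``multiplying by a power of $\alpha/\bb$'' does not deliver this: since $\aa>\tfrac12$ and $\bb<\tfrac12$, the left side is not small until $i$ is large enough that $(\alpha/\bb)^{i-3}$ has decayed substantially, a threshold that has no reason to coincide with $i_0$. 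So at best your argument covers $i\ge \max(i_0,i_1)$ for some second threshold $i_1(\alpha,\beta)$, and for the remaining $4\le i<\max(i_0,i_1)$ you fall back on ``explicit inequalities which I check.'' That is not a proof: you must show that \eqref{cond2} actually holds for every such $i$ and every $(\alpha,\beta)\in\Delta$, and this is precisely the nontrivial content of case (iii).

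The paper does \emph{not} use Claim~\ref{claim:i0} here at all (that claim is reserved for Claim~\ref{claimC-app}). Instead it gives a uniform argument for all $i\ge 4$: it reduces the verification of \eqref{cond2} to the single point $(\alpha_*(i),\beta_*(i))$ where $e_{i-3}=e_{i-2}$, using monotonicity of both sides in $\alpha$ and $\beta$, and then closes the argument by observing that $\alpha_*(i)>\tfrac1e$ for $i\ge 4$ so that $x\log\tfrac1x$ is decreasing on the relevant range. This is the key idea you are missing, and your sketch does not supply an alternative to it. Cases (i) and (ii) are handled in the paper by similar monotonicity reductions (to $\beta=e_0$ or $\beta=\min\{e_0,e_1\}$, and in case (i) additionally splitting $0<\alpha<\tfrac12$ at a root of $2(1-x)x^{i-1}=1-2x$); your ``I verify directly'' needs to be replaced by arguments of this type.
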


\begin{proof}
We use Claim~\ref{claimA}. To this end
we show \eqref{cond2} and $M<\binom{n-1}{k-1}\binom{n-1}{l-1}$ in each case. 
Then the result will follow from \eqref{ineq:A}.
In all cases $XY<\binom{n-1}{k-1}\binom{n-1}{l-1}$ for $n$ sufficiently large.
Indeed we have $\lim_{n\to\infty}\frac{XY}{\binom nk\binom nl}=
(\alpha+\alpha^{i-1}\aa)(\beta-\beta\bb^{i-1})<\alpha\beta$ 
by \eqref{boundaryCi} at $j=i-2$, and this together 
with Lemma~\ref{lemma:limit} gives us the upper bound for $XY$. Thus, 
to prove $M<\binom{n-1}{k-1}\binom{n-1}{l-1}$, we only need to show
$F(n-i-\epsilon)<\binom{n-1}{k-1}\binom{n-1}{l-1}$.
This is true if $\epsilon=0$. Indeed, in this case, we have 
$\lim_{n\to\infty}F(n-i)/(\binom nk\binom nl)=
(\alpha+\alpha^{i}\aa)(\beta-\beta\bb^{i})$ and this is less than $\alpha\beta$
by \eqref{boundaryCi} at $j=i-1$. 

(i) Let $i=2,3$, and $\epsilon=1$. 
To apply Claim~\ref{claimA}
we show that $F(n-i-1)<\binom{n-1}{k-1}\binom{n-1}{l-1}$
and \eqref{cond2}.
For the former we note that
\[
\lim_{n\to\infty}\frac{F(n-i-1)} {\binom nk\binom nl}=
(\alpha+\alpha^{i-1}\aa+\alpha^{i-1}\aa^2)
(\beta-\beta\bb^{i-1}-\beta^2\bb^{i-1}),
\]
and in view of Lemma~\ref{lemma:limit} we need to show that
\begin{align}\label{eq:F(n-i-1)}
(1+\alpha^{i-2}\aa+\alpha^{i-2}\aa^2)
(1-\bb^{i-1}-\beta\bb^{i-1})<1.
\end{align}
The LHS is increasing in $\beta$. By \eqref{b<ei} 
it suffices to show the inequality
at $\beta=e_0$ if $i=2$, and at $\beta=\min\{e_0,e_1\}$ if $i=3$.
In these cases we can verify \eqref{eq:F(n-i-1)} by direct computation.

Next we show \eqref{cond2}, that is,
\begin{align}\label{app:cA}
\frac{1-\bb^{i-1}}{\beta\bb^{i-1}} \log\frac1{\bb}
\leq\frac{1+\alpha^{i-2}\aa}{\alpha^{i-2}\aa^2}\log\frac1{\alpha}.
\end{align}
The LHS is increasing in $\beta$, indeed both $(1-\bb^{i-1})/\bb^{i-1}$
and $\frac1\beta\log\frac1{\bb}$ are increasing in $\beta$. 
Thus it suffices to check the inequality
at $\beta=\min\{\aa,e_{i-2}\}$.

If $\beta=\aa$ then \eqref{app:cA} reduces to $2\alpha^{i-1}\aa\geq 1-2\alpha$. 
This holds for $\alpha_0<\alpha<1/2$, where
$\alpha_0\in(0,1/2)$ is a unique root of $2(1-x)x^{i-1}=1-2x$. 
Thus \eqref{app:cA} holds for $\alpha_0<\alpha<1/2$.

If $\beta=e_{i-2}$ then, using 
$\bb^{i-1}=\frac{\alpha^{i-2}\aa}{1+\alpha^{i-2}\aa}$ from \eqref{eq:ej},
\eqref{app:cA} reduces to 
\begin{align}\label{app:cA2}
\frac1\beta\log\frac1\bb\bigg/\frac{1}{\aa}\log\frac1\alpha\leq
1+\alpha^{i-2}\aa. 
\end{align}
Since $\frac1{1-x}\log\frac1x$ is decreasing in $x$, the LHS of \eqref{app:cA2}
is less than $1$ if $\bb=1-e_{i-2}>\alpha$, that is, 
$\alpha^{i-1}\aa<1-2\alpha$. This holds for $0<\alpha<\alpha_1$, where
$\alpha_1\in(0,1/2)$ is a root of $(1-x)x^{i-1}=1-2x$. 
Thus \eqref{app:cA} also holds for $0<\alpha<\alpha_1$.

By definition $\alpha_0<\alpha_1$ follows. Therefore \eqref{app:cA} holds
for all $0<\alpha<1/2$. Consequently we have 
$M<\binom{n-1}{k-1}\binom{n-1}{l-1}$ and \eqref{cond2}. Thus, 
by Claim~\ref{claimA}, we get $F(x)<\binom{n-1}{k-1}\binom{n-1}{l-1}$.

(ii) Let $i=3$ and $\epsilon=0$. 
In this case we only need to show \eqref{cond2}, that is,
\[
 \frac{1-\bb^2}{\beta \bb}\log\frac1{\bb}\leq
\frac{1+ \alpha\aa}{\aa^2}\log\frac1\alpha.
\]
The LHS is increasing in $\beta$, so it suffices to check the inequality
at $\beta=e_0$. Indeed this is true if $\alpha<0.27$. 

(iii)
Let $i\geq 4$ and $\epsilon=0$. We prove \eqref{cond2}, that is,
\[
 z:=\frac{\alpha^{i-3}\aa^2(1-\bb^{i-1})\log\frac1{\bb}}
{\beta\bb^{i-2}(1+\alpha^{i-2}\aa)\log\frac1\alpha}\leq 1.
\]
The LHS is increasing in $\beta$, and it suffices to show the inequality at 
$\beta=\min\{e_{i-3},e_{i-2}\}$. 

By solving $e_{i-2}=e_{i-3}$ for $\alpha$ let $\alpha_*(i)$ be the solution
in $(0,1)$. For example, $\alpha_*(4)=0.386\ldots$, and
it follows that $\frac1e<\alpha_*(4)<\alpha_*(5)<\cdots$. 
Let $\beta_*(i)$ be defined by $\beta_*(i)=e_{i-2}=e_{i-3}$ 
at $\alpha=\alpha_*(i)$, e.g., $\beta_*(4)=0.562\ldots$, 
see Figure~\ref{fig1}. Then we have
\[
 \min\{e_{i-3},e_{i-2}\}=
\begin{cases}
 e_{i-3} & \text{if }0<\alpha\leq \alpha_*(i),\\
 e_{i-2} & \text{if }\alpha_*(i)\leq\alpha<\frac12.
\end{cases}
\]
We also note that both $e_{i-3}$ and $e_{i-2}$ are decreasing in $\alpha$.
(Here we need $i\geq 4$, because $e_0$ is increasing in $\alpha$.)

If $\beta=e_{i-3}$ then, 
using $\bb^{i-2}=\frac{\alpha^{i-3}\aa}{1+\alpha^{i-3}\aa}$, it follows that
\begin{align}\label{zi-3}
 z(\beta=e_{i-3})=\frac{(1+\beta\alpha^{i-3}\aa)\aa\log\frac1\bb}
{\beta(1+\alpha^{i-2}\aa)\log\frac1{\alpha}}.
\end{align}
Then, using $\alpha^{i-3}\aa=\frac{\bb^{i-2}}{1-\bb^{i-2}}$, 
$z(\beta=e_{i-3})\leq 1$ reduces to
\[
 \frac{1-\bb^{i-1}}{\beta(1-\bb^{i-2})}\log\frac1\bb
\leq\frac{1+\alpha^{i-2}\aa}{\aa}\log\frac1\alpha.
\]
We need to check this inequality for $0<\alpha\leq\alpha_*(i)$.
Since the LHS is increasing in $\beta$ and the RHS is decreasing in $\alpha$,
it suffices to check the inequality at 
$(\alpha,\beta)=(\alpha_*(i),\beta_*(i))$.

If $\beta=e_{i-2}$ then, using
$\bb^{i-1}=\frac{\alpha^{i-2}\aa}{1+\alpha^{i-2}\aa}$, it follows that
\begin{align}\label{zi-2}
 z(\beta=e_{i-2})=\frac{\aa\bb\log\frac1\bb}
{\alpha\beta(1+\alpha^{i-2}\aa)\log\frac1{\alpha}},
\end{align}
and $z(\beta=e_{i-2})\leq 1$ is equivalent to
\[
\frac\bb\beta\log\frac1\bb
\leq\frac{\alpha(1+\alpha^{i-2}\aa)}{\aa}\log\frac1\alpha.
\]
We need to check this inequality for $\alpha_*(i)\leq\alpha<\frac12$.
In this case the LHS is decreasing in $\beta$ and the RHS is increasing in 
$\alpha$, so again it suffices to check the inequality at 
$(\alpha,\beta)=(\alpha_*(i),\beta_*(i))$.

Now we consider the case $(\alpha,\beta)=(\alpha_*(i),\beta_*(i))$.
For simplicity let us just write $\alpha_*$ and $\beta_*$ omitting $i$,
and let $\aa_*=1-\aa_*$ and $\bb_*=1-\bb_*$.
Then, by \eqref{zi-3} and \eqref{zi-2}, 
$z(\beta=e_{i-3})=z(\beta=e_{i-2})$ implies that
$\beta_*=\frac{\aa_*}{1+\alpha_*^{i-2}\aa_*}$, and substituting this 
into \eqref{zi-2} we have
\[
 z_*:=z(\beta=\beta_*)=\frac{\bb_*\log\frac1{\bb_*}}{\alpha_*\log\frac1{\alpha_*}}.
\]
Recall that we always assume $\alpha+\beta<1$, and in particular, 
$\frac1e<\alpha_*<\bb_*$.
(Here we need $i\geq 4$ because $\alpha_*(3)<\frac1e$.)
Since $x\log\frac1x$ is decreasing for $\frac1e<x$ we get $z_*<1$, as required.
\end{proof}

\begin{claim}\label{claimB}
Let $i\geq 2$ and $\epsilon\geq 1$ be fixed integers.
Let $X=\binom{n-1}{n-k}+\binom{n-i}{n-k-1}+\binom{n-i-1}{n-k-2}$ and
$Y=\binom{n-1}{l-1}-\binom{n-i}{l-1}-\binom{n-i-1}{l-2}$.
Define a polynomial $F(x)$ by
\[
F(x):=\left(X+\binom x{n-k-3}\right)\left(Y-\binom x{l-3}\right). 
\]
Let $M:=\max\{XY,F(n-i-\epsilon)\}$.
Then there exists $n_0=n_0(i,\epsilon)$ such that for all $n>n_0$ the following
holds: if 
\begin{align}\label{ineq:claimB}
\frac{1-\bb^{i-1}-\beta\bb^{i-1}}{\beta^2\bb^{i-3+\epsilon}}\log\frac1\bb
\leq\frac{1+\aa\alpha^{i-2}+\aa^2\alpha^{i-2}}{\aa^3\alpha^{i-4+\epsilon}}
\log\frac1\alpha,
\end{align}
then 
\begin{align*}
F(x)<M
\end{align*}
for all $x$ with $n-k-4<x<n-i-\epsilon$.
\end{claim}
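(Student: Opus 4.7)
The plan is to follow the proof of Claim~\ref{claimA} essentially line-for-line, carrying one additional cascade term throughout. First I will rescale the target inequality $F(x)<M$ by dividing by $\binom{x}{l-3}/(n-k-3)!$, setting
\[
 f(x):=(F(x)-XY)\frac{(n-k-3)!}{\binom{x}{l-3}}=-(n-k-3)!\,X+(l-3)!\,Y\prod_{j=l-3}^{n-k-4}(x-j)-\prod_{j=0}^{n-k-4}(x-j),
\]
and $m:=(M-XY)(n-k-3)!/\binom{n-i-\epsilon}{l-3}$, so that the goal becomes $f(x)<m$ on $(n-k-4,\,n-i-\epsilon)$. Both endpoint values come for free: at $x=n-k-4$ the product vanishes and $f(n-k-4)=-(n-k-3)!\,X<0\le m$, while $f(n-i-\epsilon)\le m$ is built into the definition of $M$.

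Next I will argue by contradiction: if $f$ exceeds $m$ at some interior point, then a critical point $y$ with $f'(y)=0$ exists in $(n-k-4,\,n-i-\epsilon)$. Solving that equation for $\prod_{j=0}^{n-k-4}(y-j)$ and substituting back into $f$ yields
\[
 g(y):=-(n-k-3)!\,X+(l-3)!\,Y\prod_{j=l-3}^{n-k-4}(y-j)\,\sum_{j=0}^{l-4}\frac{1}{y-j}\bigg/\sum_{j=0}^{n-k-4}\frac{1}{y-j}
\]
with $g(y)>m$. The same telescoping trick as in Claim~\ref{claimA} — writing $(y-n+k+4)\sum_{j=0}^{l-4}1/(y-j)$ as a sum of terms each monotone increasing in $y$ — shows that $g$ is increasing in $y$, forcing $g(n-i-\epsilon)>m\ge 0$.

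Finally I will take asymptotics with $k=\lfloor\alpha n\rfloor$, $l=\lfloor\beta n\rfloor$. Multiplying $g(n-i-\epsilon)>0$ by $k!n^3/n!$ and applying Claim~\ref{claim:gamma} binomial-by-binomial gives
\[
 (n-k-3)!\,X\cdot\tfrac{k!n^3}{n!}\to\tfrac{\alpha+\aa\alpha^{i-1}+\aa^2\alpha^{i-1}}{\aa^3},
\]
\[
 (l-3)!\,Y\prod_{j=l-3}^{n-k-4}(n-i-\epsilon-j)\cdot\tfrac{k!n^3}{n!}\to\tfrac{\beta-\beta\bb^{i-1}-\beta^2\bb^{i-1}}{\beta^3}\Bigl(\tfrac{\alpha}{\bb}\Bigr)^{i+\epsilon-3},
\]
while the two harmonic sums tend to $\log(1/\bb)$ and $\log(1/\alpha)$ by the integral comparison used in Claim~\ref{claimA}. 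Collecting common powers of $\alpha,\beta,\aa,\bb$ turns the limiting form of $g(n-i-\epsilon)>0$ into the exact reverse of \eqref{ineq:claimB}, delivering the contradiction.

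The main obstacle is purely the bookkeeping: verifying that the extra cascade term in $X$ contributes the $\aa^2\alpha^{i-1}$ summand, the extra term in $Y$ contributes $-\beta^2\bb^{i-1}$, and that the powers collect correctly into the denominators $\aa^3\alpha^{i-4+\epsilon}$ and $\beta^2\bb^{i-3+\epsilon}$ appearing in \eqref{ineq:claimB}. The derivative computation, the monotonicity of $g$, and the endpoint handling all carry over verbatim from Claim~\ref{claimA} and introduce no genuinely new difficulties.
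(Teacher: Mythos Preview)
Your proposal is correct and follows exactly the same route as the paper's own proof, which in fact only says ``almost identical to the proof of Claim~\ref{claimA}'' and records the shifted $f(x)$, $m$, and $g(y)$ without further detail. Your asymptotic bookkeeping is accurate: the extra cascade terms do produce the $\aa^2\alpha^{i-1}$ and $-\beta^2\bb^{i-1}$ contributions, and after dividing through by $\alpha^{i+\epsilon-3}$ the limiting inequality $g(n-i-\epsilon)>0$ is precisely the reverse of~\eqref{ineq:claimB}.
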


\begin{proof}
The proof is almost identical to the proof of Claim~\ref{claimA},
and we only include a sketch. Let 
\begin{align*}
f(x)&:=(F(x)-XY)\frac{(n-k-3)!}{\binom x{l-3}}\\
&=-(n-k-3)!\,X+(l-3)!\,Y\prod_{j=l-3}^{n-k-4}(x-j)-\prod_{j=0}^{n-k-4}(x-j).
\end{align*}
Then $F(x)<M$ follows from
\[
 f(x)<(M-XY)\frac{(n-k-3)!}{\binom {n-i-\epsilon}{l-3}}=:m.  
\]
Suppose, to the contrary, that there is some $y$ such that $f(y)\geq m$.
Then we may assume that
\[
m< g(y):=
-(n-k-3)!\,X+(l-3)!\,Y\prod_{j=l-3}^{n-k-4}(y-j)
\sum_{j=0}^{l-4}\frac1{y-j}\bigg/\left(\sum_{j=0}^{n-k-4}\frac1{y-j}\right).
\]
Since $g(y)$ is increasing in $y$, $g(n-i-\epsilon)>m>0$ must hold.
However, considering $n\to\infty$, $g(n-i-\epsilon)>0$ yields the opposite
inequality to \eqref{ineq:claimB}, a contradiction.
\end{proof}

\begin{claim}\label{claimB-app}
Let $i,\epsilon, X,Y,F(x)$ and $M$ be as in Claim~\ref{claimB}.
If either
\begin{itemize}
 \item[(i)] $i=2$ and $\epsilon=2$, or
\item[(ii)] $i=3$, $\epsilon=1$, and $\alpha>0.23$,
\end{itemize}
then for sufficiently large $n$ it follows
$F(x)<\binom{n-1}{k-1}\binom{n-1}{l-1}$ for $n-k-4<x<n-i-\epsilon$.
\end{claim}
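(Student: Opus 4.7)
The plan is to mirror Claim~\ref{claimA-app}: invoke Claim~\ref{claimB} so that $F(x)\le M=\max\{XY,F(n-i-\epsilon)\}$, and then verify both (a) $M<\binom{n-1}{k-1}\binom{n-1}{l-1}$ and (b) the hypothesis \eqref{ineq:claimB}. Via Claim~\ref{claim:gamma} one computes
\[
\lim_{n\to\infty}\frac{XY}{\binom{n}{k}\binom{n}{l}}=(\alpha+\alpha^{i-1}\aa+\alpha^{i-1}\aa^2)(\beta-\beta\bb^{i-1}-\beta^2\bb^{i-1}),
\]
with an analogous expression for $F(n-i-\epsilon)$ obtained by inserting the extra summands $\alpha^{i+\epsilon-3}\aa^3$ and $-\beta^3\bb^{i+\epsilon-3}$. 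Since $\binom{n-1}{k-1}\binom{n-1}{l-1}$ is asymptotic to $\alpha\beta\binom{n}{k}\binom{n}{l}$, Lemma~\ref{lemma:limit} reduces (a) to strict inequality of these limits against $\alpha\beta$.

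The key algebraic observation is that the $\beta$-factors telescope: $1-\bb(1+\beta)=\beta^2$ and $1-\bb(1+\beta+\beta^2)=\beta^3$, and dually $\alpha+\alpha\aa+\aa^2=1$ (since $\aa(\alpha+\aa)=\aa$). In case (i) ($i=2,\epsilon=2$) the $XY$ inequality becomes $(1+\aa+\aa^2)\beta^2<1$, exactly the inequality already verified in Claim~\ref{claimA-app}(i) at $i=2$; the $F(n-4)$ inequality becomes $\beta^3(1+\aa+\aa^2+\aa^3)<1$, which is increasing in $\beta$ and is verified at $\beta=\min\{e_0,e_1\}$ by splitting on $\alpha\le\tilde\alpha$ or $\alpha\ge\tilde\alpha$. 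In case (ii) ($i=3,\epsilon=1$) the $XY$ condition coincides with Claim~\ref{claimA-app}(i) at $i=3$, and the $F(n-4)$ inequality collapses by a double use of the identity above to $1+\alpha\aa+\alpha\aa^2+\aa^3=2-\alpha$ and dually $1-\bb(\bb+\beta\bb+\beta^2)=\beta$; the requirement becomes $(2-\alpha)\beta<1$, which is \eqref{C1'} and is automatic on $\Delta$.

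For (b) the same telescoping simplifies the LHS of \eqref{ineq:claimB}. In case (i) it collapses to $\frac{1}{\bb}\log\frac{1}{\bb}$, reducing the target to $\frac{\log(1/\bb)}{\bb}\le\frac{1+\aa+\aa^2}{\aa^3}\log\frac{1}{\alpha}$; since $x\mapsto\frac{1}{x}\log\frac{1}{x}$ is decreasing on $(0,1)$ and $\bb>\alpha$ on $\Omega$, the range $\alpha\ge 1-2^{-1/3}$ reduces the goal to the purely algebraic inequality $\aa^3\le 1-\aa^3$, while for smaller $\alpha$ one uses the sharp bound $\beta<\min\{e_0,e_1\}$ to control the log-term. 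In case (ii) the LHS becomes $\frac{(1+\beta-\beta^2)\log(1/\bb)}{\beta\bb}$, and monotonicity in $\beta$ followed by a boundary check at $\beta=\min\{e_0,e_1\}$ finishes the verification; the cutoff $\alpha>0.23$ is precisely the threshold at which this boundary evaluation tends to equality. The main obstacle is this last step in case (i) when $\alpha$ is small: the clean algebraic identities fail to absorb the $\log$-factor, and one must exploit $\beta<\min\{e_0,e_1\}$ as a sharp bound, paralleling the careful $\alpha_*$-split in Claim~\ref{claimA-app}(iii). Once (a) and (b) are established, Claim~\ref{claimB} combined with Lemma~\ref{lemma:limit} yields $F(x)<\binom{n-1}{k-1}\binom{n-1}{l-1}$ on the stated interval for sufficiently large $n$.
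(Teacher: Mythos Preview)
Your proposal is correct and follows the same overall strategy as the paper: invoke Claim~\ref{claimB}, then verify separately that $M<\binom{n-1}{k-1}\binom{n-1}{l-1}$ and that hypothesis~\eqref{ineq:claimB} holds. Your telescoping identities $1-\bb(1+\beta)=\beta^2$, $1-\bb(1+\beta+\beta^2)=\beta^3$, and the dual $1+\alpha\aa+\alpha\aa^2+\aa^3=2-\alpha$ are genuine simplifications that the paper does not make explicit; in particular, your observation that the $F(n-4)$ condition in case~(ii) collapses to $(2-\alpha)\beta<1$ is exactly the paper's $(1+\aa)(1-\bb)<1$, and your reduction of the left side of~\eqref{ineq:claimB} in case~(i) to $\frac{1}{\bb}\log\frac{1}{\bb}$ is cleaner than what appears in the paper.

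Where you diverge is in the final verification of~\eqref{ineq:claimB} in case~(i): you split on $\alpha\gtrless 1-2^{-1/3}$, handle large~$\alpha$ via the monotonicity trick $\bb>\alpha$, and then for small~$\alpha$ appeal somewhat vaguely to ``the sharp bound $\beta<\min\{e_0,e_1\}$'' and an $\alpha_*$-type split. The paper is much more direct here: it simply notes that the left side of~\eqref{ineq:claimB} is increasing in~$\beta$ and checks the inequality at $\beta=e_0$ by a one-variable computation, uniformly in~$\alpha$. Your telescoping actually makes this even easier, since at $\beta=e_0$ one has $\bb=\frac{\aa}{1+\aa}$ and the target becomes a single explicit inequality in~$\alpha$. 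So your detour through the split is unnecessary; the paper's uniform boundary check suffices and is simpler.
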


\begin{proof}
(i) 
First we show that $M<\binom{n-1}{k-1}\binom{n-1}{l-1}$.
To bound $XY$ we note that
\[
\lim_{n\to\infty}\frac{XY}{\binom nk\binom nl}
=(\alpha+\alpha\aa+\alpha\aa^2)(\beta-\beta\bb-\beta^2\bb).
\]
In view of Lemma~\ref{lemma:limit} we need to show that
$(1+\aa+\aa^2)(1-\bb-\beta\bb)<1$. The LHS is increasing in $\beta$, and
it suffices to check the inequality at $\beta=e_0$, that is,
$\frac{3-3\alpha+\alpha^2}{(2-\alpha)^2}<1$. This is indeed true for
$0<\alpha<1/2$. Similarly as for $F(n-4)$ it suffices to check
$(1+\aa+\aa^2+\aa^3)(1-\bb-\beta\bb-\beta^2\bb)<1$ at $\beta=e_0$,
and it is also true. 

Next we check \eqref{ineq:claimB}, that is,
\[
 \frac{1-\bb-\beta\bb}{\beta^2\bb}\log\frac1\bb
\leq\frac{1+\aa+\aa^2}{\aa^3}\log\frac1\alpha.
\]
Since the LHS is increasing in $\beta$ it suffices to check the
inequality at $\beta=e_0$, which can be verified by direct computation. 

(ii) In this case $M<\binom{n-1}{k-1}\binom{n-1}{l-1}$ follows by showing
$(1+\alpha\aa+\alpha\aa^2)(1-\bb^2-\beta\bb^2)<1$ and
$(1+\aa)(1-\bb)<1$. The latter is equivalent to \eqref{boundaryCi} for $j=0$.
The LHS of the former is increasing in $\beta$ and the inequality is
indeed true at $\beta=\min\{e_0,e_1\}$.
As for \eqref{ineq:claimB} we need to show
\[
 \frac{1-\bb^2-\beta\bb^2}{\beta^2\bb}\log\frac1\bb
\leq\frac{1+a \aa+a \aa^2}{\aa^3}\log\frac1\alpha.
\]
The LHS is increasing in $\beta$, and one can verify the inequality
at $\beta=e_0$ for $\alpha>0.23$ in this case.
\end{proof}

\begin{claim}\label{claimC}
Let $i\geq 2$ be a fixed integer.
Let $X=\binom{n-1}{n-k}$ and $Y=\binom{n-1}{l-1}$.
Define a polynomial $F(x)$ by
\[
F(x):=\left(X+\binom x{n-k-1}\right)\left(Y-\binom x{l-1}\right). 
\]
Then there exists $n_0=n_0(i)$ such that for all $n>n_0$ the following
holds: if 
\begin{align}\label{ineq:claimC}
\frac1{\bb^{i-1}}\log\frac1\bb<\frac1{\alpha^{i-2}\aa}\log\frac1\alpha,
\end{align}
then 
\begin{align*}
F(x)<XY
\end{align*}
for all $x$ with $n-k-2<x<n-i$.
\end{claim}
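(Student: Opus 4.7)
The plan is to mimic the argument of Claim~\ref{claimA}, but with the simplification that now $M=XY$, so the target becomes $f(x)<0$ on the relevant interval rather than $f(x)<m$ for some positive $m$. Set
\[
f(x):=(F(x)-XY)\,\frac{(n-k-1)!}{\binom{x}{l-1}}
=-(n-k-1)!\,X+(l-1)!\,Y\prod_{j=l-1}^{n-k-2}(x-j)-\prod_{j=0}^{n-k-2}(x-j),
\]
so that $F(x)<XY$ on the open interval is equivalent to $f(x)<0$ there.

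First I would check the two endpoints. At $x=n-k-2$, both products carry the factor $x-(n-k-2)=0$, so $f(n-k-2)=-(n-k-1)!\,X<0$. At $x=n-i$, Claim~\ref{claim:gamma} gives $F(n-i)/(\binom{n}{k}\binom{n}{l})\to(\alpha+\aa\alpha^{i-1})(\beta-\beta\bb^{i-1})$, which is strictly less than $\alpha\beta$ by \eqref{boundaryCi} at $j=i-2$; together with Lemma~\ref{lemma:limit} this yields $f(n-i)<0$ for all sufficiently large $n$.

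Next, suppose for contradiction that $f(y)\geq 0$ at some interior $y\in(n-k-2,n-i)$. Since $f$ is negative at both endpoints, we may take $y$ to be an interior critical point with $f'(y)=0$. Solving that equation for $\prod_{j=0}^{n-k-2}(y-j)$ and substituting back into $f$ yields $f(y)=g(y)$, where
\[
g(y):=-(n-k-1)!\,X+(l-1)!\,Y\prod_{j=l-1}^{n-k-2}(y-j)\cdot\frac{\sum_{j=0}^{l-2}\frac{1}{y-j}}{\sum_{j=0}^{n-k-2}\frac{1}{y-j}}.
\]
Exactly as in the proof of Claim~\ref{claimA} --- factor $(y-(n-k-2))$ out of the product and pair it with the numerator sum, so that each resulting factor is a positive increasing function of $y$ on $(n-k-2,\infty)$ --- one checks that $g$ is increasing in $y$, whence $g(n-i)>g(y)\geq 0$.

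Finally I would pass to the limit $n\to\infty$ with $k=\lfloor\alpha n\rfloor$ and $l=\lfloor\beta n\rfloor$. Writing $g(n-i)>0$ as a comparison between its two positive terms, Claim~\ref{claim:gamma} gives $\frac{Y\binom{n-i}{n-k-1}}{X\binom{n-i}{l-1}}\to\frac{\alpha^{i-2}\aa}{\bb^{i-1}}$, while the Riemann-sum estimates used in Claim~\ref{claimA} yield $\sum_{j=0}^{l-2}\frac{1}{n-i-j}\to\log\frac{1}{\bb}$ and $\sum_{j=0}^{n-k-2}\frac{1}{n-i-j}\to\log\frac{1}{\alpha}$. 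The surviving asymptotic inequality is $\frac{1}{\bb^{i-1}}\log\frac{1}{\bb}\geq\frac{1}{\alpha^{i-2}\aa}\log\frac{1}{\alpha}$, which contradicts \eqref{ineq:claimC}. The only real subtlety is the bookkeeping of normalizations: once the scaling factors $\binom{n}{k}\binom{n}{l}$ are aligned so that they cancel between the two terms of $g(n-i)$, the argument is a direct and simpler analogue of the proofs of Claims~\ref{claimA} and \ref{claimB}.
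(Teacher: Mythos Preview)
Your proof is correct and follows the same approach as the paper's: define the auxiliary function $f(x)=(F(x)-XY)(n-k-1)!/\binom{x}{l-1}$, check the endpoints, pass to a critical point, bound by the increasing function $g$, and derive the contradiction from the asymptotics at $y=n-i$. The only difference is expository --- you spell out the endpoint check at $x=n-k-2$ and the justification $f(y)=g(y)$ at the critical point, which the paper leaves implicit by referring back to Claims~\ref{claimA} and \ref{claimB}.
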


\begin{proof}
The proof is similar to and easier than that of Claims~\ref{claimA}
and \ref{claimB}. 
Note that $F(n-i)<XY$ follows from \eqref{boundaryCi} at $j=i-2$. Let 
\begin{align*}
f(x)&:=(F(x)-XY)\frac{(n-k-1)!}{\binom x{l-1}}\\
&=-(n-k-1)!\,X+(l-1)!\,Y\prod_{j=l-1}^{n-k-2}(x-j)-\prod_{j=0}^{n-k-2}(x-j).
\end{align*}
Then $F(x)<XY$ follows from $f(x)<0$.
Suppose, to the contrary, that there is some $y$ such that $f(y)\geq 0$.
Then we may assume that
\[
0\leq g(y):=
-(n-k-1)!\,X+(l-1)!\,Y\prod_{j=l-1}^{n-k-2}(y-j)
\sum_{j=0}^{l-2}\frac1{y-j}\bigg/\left(\sum_{j=0}^{n-k-2}\frac1{y-j}\right).
\]
Since $g(y)$ is increasing in $y$, $g(n-i)\geq 0$ must hold, that is,
\begin{align}\label{cond:claimC}
 \frac{(k-1)!(n-k)}{(n-l)!}\prod_{j=l-1}^{n-k-2}(n-i-j)
\sum_{j=0}^{l-2}\frac1{n-i-j}\geq\sum_{j=0}^{n-k-2}\frac1{n-i-j}.
\end{align}
But this reduces to the opposite inequality to \eqref{ineq:claimC}
by considering $n\to\infty$.
\end{proof}

\begin{claim}\label{claimC-app}
Let $i,X,Y$, and $F(x)$ be as in Claim~\ref{claimC}.
If $i=i_0$, that is,
\begin{align}\label{ineq:cC-app}
(1+\alpha^{i-2}\aa)\log\frac1{1-e_{i-2}}<\log\frac1{\alpha}, 
\end{align}
then for sufficiently large $n$ it follows
$F(x)<\binom{n-1}{k-1}\binom{n-1}{l-1}$ for $n-k-2<x<n-i$.
\end{claim}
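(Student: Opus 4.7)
The plan is to reduce the statement directly to Claim~\ref{claimC}. Observe first that $X=\binom{n-1}{n-k}=\binom{n-1}{k-1}$ and $Y=\binom{n-1}{l-1}$, so $XY=\binom{n-1}{k-1}\binom{n-1}{l-1}$. Hence the target bound $F(x)<\binom{n-1}{k-1}\binom{n-1}{l-1}$ is literally the conclusion $F(x)<XY$ of Claim~\ref{claimC}, and the whole task reduces to verifying its hypothesis
\[
\frac{1}{\bb^{i-1}}\log\frac{1}{\bb}<\frac{1}{\alpha^{i-2}\aa}\log\frac{1}{\alpha}
\]
from the assumption \eqref{ineq:cC-app}.

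To do so, I would first translate \eqref{ineq:cC-app} into an inequality of the shape of \eqref{ineq:claimC} at the boundary point $\bb_*:=1-e_{i-2}$. The defining identity \eqref{eq:ej} for $e_{i-2}$ gives
\[
\bb_*^{i-1}=\frac{\alpha^{i-2}\aa}{1+\alpha^{i-2}\aa},
\qquad\text{equivalently}\qquad
\frac{1}{\bb_*^{i-1}}=\frac{1+\alpha^{i-2}\aa}{\alpha^{i-2}\aa}.
\]
Dividing \eqref{ineq:cC-app} by $\alpha^{i-2}\aa$ and substituting this identity on the left-hand side yields
\[
\frac{1}{\bb_*^{i-1}}\log\frac{1}{\bb_*}<\frac{1}{\alpha^{i-2}\aa}\log\frac{1}{\alpha},
\]
which is exactly \eqref{ineq:claimC} with $\bb$ replaced by $\bb_*$.

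To close the remaining gap between $\bb_*$ and the actual $\bb$, I would use monotonicity. Since $(\alpha,\beta)\in\Delta$, condition \eqref{b<ei} yields $\beta<e_{i-2}$, i.e.\ $\bb>\bb_*$. Consider $\varphi(x):=x^{-(i-1)}\log(1/x)$; a direct computation gives $\varphi'(x)=-x^{-i}\bigl((i-1)\log(1/x)+1\bigr)$, which is negative on $(0,1)$, so $\varphi$ is strictly decreasing there. Consequently $\varphi(\bb)<\varphi(\bb_*)$, which is precisely \eqref{ineq:claimC}. Applying Claim~\ref{claimC} then produces $F(x)<XY=\binom{n-1}{k-1}\binom{n-1}{l-1}$ for $n-k-2<x<n-i$ and all sufficiently large $n$, as required.

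The substantive work has already been packaged into the earlier results: Claim~\ref{claimC} contains the shadow/Kruskal--Katona analysis, and Claim~\ref{claim:i0} certifies that such an $i_0$ (and hence \eqref{ineq:cC-app}) exists. There is therefore no real obstacle in the present step; the one thing to notice is that dividing \eqref{ineq:cC-app} by $\alpha^{i-2}\aa$ and using the characterising equation for $e_{i-2}$ rewrites the hypothesis in the exact form needed, after which the monotonicity of $\varphi$ handles the passage from the boundary value $\bb_*$ to the actual $\bb$.
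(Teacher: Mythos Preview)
Your proof is correct and follows essentially the same route as the paper: both note that $XY=\binom{n-1}{k-1}\binom{n-1}{l-1}$ so that it suffices to verify \eqref{ineq:claimC}, then use that the left-hand side of \eqref{ineq:claimC} is increasing in $\beta$ (equivalently, your $\varphi$ is decreasing on $(0,1)$), so that the worst case is $\beta=e_{i-2}$, where the identity $\bb^{i-1}=\alpha^{i-2}\aa/(1+\alpha^{i-2}\aa)$ from \eqref{eq:ej} turns \eqref{ineq:claimC} into exactly \eqref{ineq:cC-app}. Your version is slightly more explicit about the monotonicity computation, but the argument is the same.
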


\begin{proof}
Since $XY=\binom{n-1}{k-1}\binom{n-1}{l-1}$ we only need to show
\eqref{ineq:claimC}. The LHS of \eqref{ineq:claimC} is increasing in $\beta$,
it suffices to check the inequality at $\beta=e_{i-2}$. In this case,
using $\bb^{i-1}=\frac{\alpha^{i-2}\aa}{1+\alpha^{i-2}\aa}$, the
inequality \eqref{ineq:claimC} reduces to \eqref{ineq:cC-app}.
\end{proof}

Using the above claims we are going to prove
\begin{align}\label{AB<EKR}
 |\cA||\cB|<\binom{n-1}{k-1}\binom{n-1}{l-1}.
\end{align}  

\subsection{The case $\binom{n-1}{n-k}<|\cA|\leq\binom{n-1}{n-k}+\binom{n-i}{n-k-1}$ for $i=i_0$}
Let $|\cA|=\binom{n-1}{n-k}+\binom x{n-k-1}$ for $n-k-1\leq x\leq n-i$.
Then, by Lemma~\ref{KK lemma}, $|\cB|\leq\binom{n-1}{l-1}-\binom x{l-1}$.
By Claim~\ref{claimC-app} we have \eqref{AB<EKR}.

\subsection{The case
$\binom{n-1}{n-k}+\binom{n-i}{n-k-1}
<|\cA|\leq\binom{n-1}{n-k}+\binom{n-i+1}{n-k-1}$ for some $i$ with 
$4\leq i\leq i_0$}

Let $|\cA|=X+\binom x{n-k-2}$
where $X=\binom{n-1}{n-k}+\binom{n-i}{n-k-1}$ and $n-k-2\leq x\leq n-i$. 
Then, by Lemma~\ref{KK lemma}, 
$|\cB|\leq Y-\binom x{l-2}$, where $Y=\binom{n-1}{l-1}-\binom{n-i}{l-1}$. 
By Claim~\ref{claimA-app} (iii) we have \eqref{AB<EKR}.

\subsection{The case $\binom{n-1}{n-k}+\binom{n-3}{n-k-1}<|\cA|\leq
\binom{n-1}{n-k}+\binom{n-2}{n-k-1}$.
}

Let $|\cA|=X+\binom{x}{n-k-2}$, where $X=\binom{n-1}{n-k}+\binom{n-3}{n-k-1}$
and $x\geq n-k-2$.
Then, by Lemma~\ref{KK lemma}, $|\cB|\leq Y-\binom{x}{l-2}$,
where $Y=\binom{n-1}{l-1}-\binom{n-3}{l-1}$.
By Claim~\ref{claimA-app} (i) and (ii), we have
\eqref{AB<EKR} for $x\leq n-4$, 
and for $x\leq n-3$ and $\alpha<0.27$.
Thus, for the remaining, we may assume that
$\binom{n-1}{n-k}+\binom{n-3}{n-k-1}+\binom{n-4}{n-k-2}
<|\cA|\leq\binom{n-1}{n-k}+\binom{n-2}{n-k-1}$ and $\alpha\geq 0.27$.
Let $|\cA|=X+\binom x{n-k-3}$, where 
$X=\binom{n-1}{n-k}+\binom{n-3}{n-k-1}+\binom{n-4}{n-k-2}$
and $x\geq n-k-3$.
Then, by Lemma~\ref{KK lemma},
$|\cB|\leq Y-\binom x{l-3}$, where 
$Y=\binom{n-1}{l-1}-\binom{n-i}{l-1}-\binom{n-4}{l-2}$. 
In this case, by Claim~\ref{claimB-app} (ii), we have \eqref{AB<EKR}
for $x\leq n-3$ and $\alpha>0.23$.

\subsection{The case $\binom{n-1}{n-k}+\binom{n-2}{n-k-1}<|\cA|$.}\label{subsec3.4}

First suppose that $\binom{n-1}{n-k}+\binom{n-2}{n-k-1}<|\cA|\leq
\binom{n-1}{n-k}+\binom{n-2}{n-k-1}+\binom{n-3}{n-k-2}$.
Let $|\cA|=\binom{n-1}{n-k}+\binom{n-2}{n-k-1}+\binom{x}{n-k-2}$
for $n-k-2\leq x\leq n-3$.
Then, by Lemma~\ref{KK lemma}, $|\cB|\leq\binom{n-2}{l-2}-\binom{x}{l-2}$, and 
\eqref{AB<EKR} follows from Claim~\ref{claimA-app} (i).

Next suppose that 
$\binom{n-1}{n-k}+\binom{n-2}{n-k-1}+\binom{n-3}{n-k-2}<|\cA|
\leq\binom{n-1}{n-k}+\binom{n-2}{n-k-1}+\binom{n-3}{n-k-2}+\binom{n-4}{n-k-3}$.
Let $|\cA|=\binom{n-1}{n-k}+\binom{n-2}{n-k-1}+\binom{n-3}{n-k-2}
+\binom{x}{n-k-3}$ for $n-k-3\leq x\leq n-4$.
Then, by Lemma~\ref{KK lemma}, $|\cB|\leq\binom{n-3}{l-3}-\binom{x}{l-3}$, and
\eqref{AB<EKR} follows from Claim~\ref{claimB-app} (i).

Finally suppose that 
$\binom{n-1}{n-k}+\binom{n-2}{n-k-1}+\cdots+\binom{n-t}{n-k-t+1}<|\cA|
\leq\binom{n-1}{n-k}+\binom{n-2}{n-k-1}+\cdots+\binom{n-t-1}{n-k-t}$ 
for some $t\geq 4$. Then, by Lemma~\ref{KK lemma}, $|\cB|\leq\binom{n-t}{l-t}$.
In this case we generously estimate 
\begin{align*}
\lim_{n\to\infty}\frac{|\cA|}{\binom nk}\leq\alpha(1+\aa+\aa^2+\cdots+\aa^{t})
,\text{ and }
\lim_{n\to\infty}\frac{|\cB|}{\binom nl}\leq\beta^{t}.
\end{align*}
Let $\gamma:=1+\aa+\aa^2+\cdots+\aa^{t}$.
In view of Lemma~\ref{lemma:limit} it suffices to show that $\gamma\beta^{t-1}<1$. 
Recall from \eqref{beta<tilde beta} that $\beta\leq\tilde\beta=2-\sqrt{2}$. 
If $t=4$, then $\gamma=\frac{1-\aa^4}\alpha\leq 4$ and
$\beta^{t-1}\leq\tilde\beta^3<0.21$, so $\gamma\beta^{t-1}<1$.
Let $t\geq 5$.
Since $\gamma\beta^{t-1}\leq(t+1)\tilde\beta^{t-1}$ and
the RHS is decreasing in $t$, it is maximized at $t=5$. Thus
$\gamma\beta^{t-1}\leq 6\tilde\beta^4<1$.

This completes the proof of Lemma~\ref{main lemma}.

\section{Proof of Theorem~\ref{thm-unif}: Uniqueness of the optimal families}\label{sec4}
In the previous section we have proved Lemma~\ref{main lemma}, that is, 
if $|\cA|>\binom{n-1}{k-1}$ or
$|\cB|>\binom{n-1}{l-1}$, then $|\cA||\cB|<\binom{n-1}{k-1}\binom{n-1}{l-1}$
(under the assumptions of Theorem~\ref{thm-unif}).
Thus we have 
\begin{align}\label{ineq:uniqueness}
|\cA||\cB|\leq\binom{n-1}{k-1}\binom{n-1}{l-1},
\end{align}
and equality holds if and only if $|\cA|=\binom{n-1}{k-1}$ and
$|\cB|=\binom{n-1}{l-1}$. In this case we show that $\cA$ and $\cB$ are stars.

Suppose that equality holds in \eqref{ineq:uniqueness}. Then 
$|\cA^c|=|\cA|=\binom{n-1}{n-k}$ and, by Lemma~\ref{KK lemma},
\[
\binom{n-1}{l-1}=|\cB|\leq\binom nl-|\sigma_l(\cA^c)|\leq\binom nl-\binom{n-1}l
= \binom{n-1}{l-1}.
\]
Thus $|\sigma_l(\cA^c)|=\binom{n-1}l$ must hold, and it follows from the
Kruskal--Katona Theorem that this is possible only
when $\cA^c=\binom{[n]\setminus\{i\}}{n-k}$ for some $i\in[n]$.
In this case we have $\cA=\{A\in\binom{[n]}k:i\in A\}$ and
$\cB=\{B\in\binom{[n]}l:i\in B\}$, that is, both families are stars.

\section{Proof of Theorem~\ref{thm-p}: The measure version}\label{sec5}
The proof is essentially written in \cite{T0}, but for convenience
we include it here.

Recall that $\Delta$ is the set of $(\alpha,\beta)\in\Omega$ satisfying 
$\mu_{\alpha}(\cA_j)\mu_\beta(\cB_j)<\alpha\beta$ for all $j\geq 0$.
Let $(\alpha,\beta)\in\Delta$ be fixed.
Then we can choose $\epsilon>0$ sufficiently small so that
$\alpha-\epsilon<\alpha'<\alpha+\epsilon$ and 
$\beta-\epsilon<\beta'<\beta+\epsilon$ imply $(\alpha',\beta')\in \Delta$.
Let 
\begin{center}
$I:=((\alpha-\ee)n,(\alpha+\ee)n)\cap\N$
and $J:=((\beta-\ee)n,(\beta+\ee)n)\cap\N$.
\end{center}
Let $\cF$ and $\cG$ be cross-intersecting families in $2^{[n]}$.
Let $\cF^{(k)}:=\cF\cap\binom{[n]}k$ and $\cG^{(l)}:=\cG\cap\binom{[n]}l$.
If $k\in I$ and $l\in J$ then we can apply Theorem~\ref{thm-unif} to
$\cF^{(k)}$ and $\cG^{(l)}$, and we 
get $|\cF^{(k)}||\cG^{(l)}|\leq\binom{n-1}{k-1}\binom{n-1}{l-1}$
provided $n>n_0(\alpha,\beta)$.
We also use the fact that the binomial distribution $B(n,\alpha)$ is 
concentrated around $\alpha n$, which yields
$$\lim_{n\to\infty}\sum_{k\in I}\ts{\binom{n-1}{k-1}}\alpha^k(1-\alpha)^{n-k}
=\alpha,
\text{ and }\displaystyle
\lim_{n\to\infty}\sum_{k\not\in I}\ts{\binom{n}{k}}\alpha^k(1-\alpha)^{n-k}=0,$$
and the corresponding formulae for $\beta$ and $J$.
Thus, as $n\to\infty$, we have
\begin{align*}
\mu_{\alpha}(\cF) \mu_{\beta}(\cG) &\leq 
\bigg(\sum_{k\in I}|\,\cF^{(k)}|\,\alpha^k(1-\alpha)^{n-k}
+\sum_{k\not\in I}{\ts {\binom nk}}\alpha^k(1-\alpha)^{n-k}\bigg)\nonumber\\
&\qquad \times
\bigg(\sum_{l\in J}|\,\cG^{(l)}|\,\beta^l(1-\beta)^{n-l}
+\sum_{l\not\in J}{\ts {\binom nl}}\beta^k(1-\beta)^{n-l}\bigg)
\nonumber\\
&=
\bigg(\sum_{k\in I}|\,\cF^{(k)}|\,\alpha^{k}(1-\alpha)^{n-k}\bigg)
\bigg(\sum_{l\in J}|\,\cG^{(l)}|\,\beta^{l}(1-\beta)^{n-l}\bigg)
+o(1)\nonumber\\
&=
\sum_{k\in I}\sum_{l\in J}|\,\cF^{(k)}||\,\cG^{(l)}|\,
\alpha^{k}(1-\alpha)^{n-k}\beta^{l}(1-\beta)^{n-l}+o(1)\nonumber\\
&\leq
\sum_{k\in I}\sum_{l\in J}
{\ts {\binom{n-1}{k-1}}}
{\ts {\binom{n-1}{l-1}}}
\alpha^{k}(1-\alpha)^{n-k}
\beta^{l}(1-\beta)^{n-l}+o(1)\nonumber\\
&=
\bigg(\sum_{k\in I}
{\ts {\binom{n-1}{k-1}}}
\alpha^{k}(1-\alpha)^{n-k}\bigg)
\bigg(\sum_{l\in J}
{\ts {\binom{n-1}{l-1}}}
\beta^{l}(1-\beta)^{n-l}\bigg)+o(1)\nonumber\\
&=\alpha\beta+o(1),
\end{align*}
and $m(\alpha,\beta)\leq\alpha\beta$.
Then, by the fact that $m(n,\alpha,\beta)$ is monotone increasing in $n$ 
(see the footnote at the definition of $m(\alpha,\beta)$), it follows that
$m(n,\alpha,\beta)=\alpha\beta$ for all $n\geq 1$. 
\qed

\section{Proof of Theorem~\ref{thm1}: A sketch}\label{sec6}
The proof is similar to the proof of the main result in \cite{MT}, 
and much easier than that of Theorem~\ref{thm-unif}. So we only include a 
sketch here.

Recall that the case $|\cB|>\binom{n-1}{l-1}$ is already settled by 
Claim~\ref{easy case},
and we may assume that $|\cA|>\binom{n-1}{k-1}$.
Let
\[
|\cA|=\sum_{i=1}^t\binom{n-i}{n-k-i+1}+\binom x{n-k-t}
\]
for some $t$ with $1\leq t\leq n-k$ and $x$ with $n-k-t\leq x\leq n-t-1$.
Then, by Lemma~\ref{KK lemma}, we have
\[
|\cB|\leq\binom{n-t}{l-t}-\binom x{l-t}. 
\]
We need to show the following.
\begin{lemma}\label{lemma for thm1}
For $1\leq t\leq n-k$ and $n-k-t\leq x\leq n-t-1$,
\[
\left(\sum_{i=1}^t\binom{n-i}{n-k-i+1}+\binom x{n-k-t} \right)
\left(\binom{n-t}{l-t}-\binom x{l-t}\right)<\binom{n-1}{k-1}\binom{n-1}{l-1}.
\]
\end{lemma}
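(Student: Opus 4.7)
The plan is to write the LHS as $F_t(x) := \left(X_t + \binom{x}{n-k-t}\right)\left(Y_t - \binom{x}{l-t}\right)$ with $X_t := \sum_{i=1}^t \binom{n-i}{n-k-i+1}$ and $Y_t := \binom{n-t}{l-t}$, treat $F_t$ as a polynomial in $x$, and show $F_t(x) < \binom{n-1}{k-1}\binom{n-1}{l-1}$ for all admissible $(t,x)$. A useful bookkeeping identity, obtained from Pascal's relation $\binom{n-t}{l-t}-\binom{n-t-1}{l-t}=\binom{n-t-1}{l-t-1}$ and the definition of $X_{t+1}$, is $F_t(n-t-1) = X_{t+1}\,Y_{t+1}$, which ties the right endpoint of the $t$-interval to quantities at level $t+1$.

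The heart of the argument is the case $t=1$, where $X_1 = \binom{n-1}{k-1}$ and $Y_1 = \binom{n-1}{l-1}$, so the target is $F_1(x)<X_1 Y_1$ and Claim~\ref{claimC} with $i=2$ applies almost verbatim, except that in place of the asymptotic passage $n\to\infty$ we must keep the finite-$n$ quantities. At the right endpoint $x=n-2$, the inequality reduces via $\binom{n-2}{k-1}/\binom{n-1}{k-1}=(n-k)/(n-1)$ and $\binom{n-1}{l-1}/\binom{n-2}{l-2}=(n-1)/(l-1)$ to exactly \eqref{C1}. For $x\in(n-k-1,n-2)$ I would normalize
\[
f(x):=\left(F_1(x)-\tbinom{n-1}{k-1}\tbinom{n-1}{l-1}\right)\frac{(n-k-1)!}{\binom{x}{l-1}},
\]
suppose $f(y)\geq 0$ at an interior $y$, deduce from $f'(y)=0$ an auxiliary $g(y)$ that is monotone increasing, and conclude $g(n-2)\geq 0$. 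Converting the products in $g(n-2)$ to harmonic sums yields $(n-k)\sum_{i=n-l}^{n-2}\frac1i\geq(n-l)\sum_{i=k}^{n-2}\frac1i$, the opposite of \eqref{C2}.

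For $t\geq 2$ the argument should be easier. The endpoint values $F_t(n-k-t)=(X_t+1)(Y_t-\binom{n-k-t}{l-t})$ and $F_t(n-t-1)=X_{t+1}Y_{t+1}$ can be bounded via crude estimates $X_t\leq\binom{n}{k}$ together with the rapid decay of $Y_t=\binom{n-t}{l-t}$ relative to $\binom{n-1}{l-1}$, with the margin supplied by \eqref{C1}. On the interior one mimics the critical-point analysis of $t=1$, with \eqref{C2} providing even more slack, since the product $\binom{x}{n-k-t}\binom{x}{l-t}$ now exhibits more cancellation.

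The main obstacle is the finite-$n$ derivative calculation for $t=1$: translating $g(n-2)\geq 0$ exactly into the harmonic-sum form of \eqref{C2} requires careful algebraic bookkeeping, since unlike the asymptotic Claims~\ref{claimA}--\ref{claimC} we cannot pass to an $n\to\infty$ limit here. Uniqueness of the extremal families then follows as in Section~\ref{sec4}: equality forces $|\cA|=\binom{n-1}{k-1}$ and $|\cB|=\binom{n-1}{l-1}$, hence $|\sigma_l(\cA^c)|=\binom{n-1}{l}$, and by Katona's characterization of minimum shadows $\cA^c=\binom{[n]\setminus\{i\}}{n-k}$ for some $i\in[n]$, making both $\cA$ and $\cB$ stars centered at $i$.
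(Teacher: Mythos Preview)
Your outline for $t=1$ matches the paper's argument exactly: apply the critical-point method of Claim~\ref{claimC} with $i=2$, keeping the finite-$n$ sums. The product $\frac{(k-1)!(n-k)}{(n-l)!}\prod_{j=l-1}^{n-k-2}(n-2-j)$ collapses to $\frac{n-k}{n-l}$, so $g(n-2)\geq 0$ reduces \emph{identically} to the negation of \eqref{C2}; no asymptotic passage is needed, and your ``main obstacle'' evaporates. The paper likewise splits off $t\geq 4$ for crude estimates (as in Subsection~\ref{subsec3.4}) and treats $t=2,3$ by repeating the $t=1$ analysis.

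The gap is in your endpoint treatment for $t=2,3$. The bound $X_t\leq\binom nk$ together with \eqref{C1} does \emph{not} control $X_{t+1}Y_{t+1}$: take $(n,k,l)=(100,20,55)$, where \eqref{C1} and \eqref{C2} both hold, yet $\frac nk\cdot\frac{(l-1)(l-2)}{(n-1)(n-2)}\approx 1.47>1$. What does work is keeping the exact $X_{t+1}$. Writing $r_i=\frac{(n-k)\cdots(n-k-i+1)}{(n-1)\cdots(n-i)}$ and $s_i=\frac{(l-1)\cdots(l-i)}{(n-1)\cdots(n-i)}$, one has $r_i<r_1^i$ and $s_i<s_1^i$ for $i\geq 2$, so
\[
\frac{X_{t+1}Y_{t+1}}{\binom{n-1}{k-1}\binom{n-1}{l-1}}
=(1+r_1+\cdots+r_t)\,s_t
<\frac{1+r_1+\cdots+r_1^t}{(1+r_1)^t}<1
\]
for every $t\geq 2$, since $\binom ti\geq 1$ with strict inequality for some $i$. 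With this fix your endpoint bounds go through, and the interior critical-point condition for $t=2,3$ is genuinely weaker than \eqref{C2} (compare: \eqref{C2} says $\frac{n-k}{n-l}\sum_{n-l}^{n-2}\frac1i<\sum_{k}^{n-2}\frac1i$, while the $t=2$ analogue has the larger factor $\frac{X_2}{\binom{n-1}{k-1}}\cdot\frac{n-k}{n-l}$ on the right), so your claim that \eqref{C2} provides slack there is correct.
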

The proof of the lemma breaks down into four cases: $t=1$, $t=2$, $t=3$,
and $t\geq 4$. The case $t\geq 4$ is easy, and one can argue as in
Subsection~\ref{subsec3.4}. The proof of the other three cases is similar,
and we only present the case $t=1$ here. In this case Lemma~\ref{lemma for thm1}
reads as follows.

\begin{claim}
 For $n\geq 2$ and $n-k-1\leq x\leq n-2$ we have
\[
 \left(\binom{n-1}{k-1}+\binom x{n-k-1}\right)
 \left(\binom{n-1}{l-1}-\binom x{l-1}\right)<\binom{n-1}{k-1}\binom{n-1}{l-1}.
\]
\end{claim}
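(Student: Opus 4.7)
The plan is to follow the template of Claims~\ref{claimA}--\ref{claimC}. Write $A := \binom{n-1}{k-1}$ and $B := \binom{n-1}{l-1}$. Because $k+l<n$ gives $n-k-1\geq l$, we have $\binom{x}{l-1}>0$ throughout, so $F(x)<AB$ is equivalent, after multiplying by $(n-k-1)!/\binom{x}{l-1}$, to $f(x)<0$ where
\[
f(x) := -A(n-k-1)! + B(l-1)!\prod_{j=l-1}^{n-k-2}(x-j) - \prod_{j=0}^{n-k-2}(x-j).
\]
I would actually prove $f(x)<0$ on the slightly enlarged real interval $[n-k-2,n-2]$. The factor $(x-(n-k-2))$ divides both products, so $f(n-k-2) = -A(n-k-1)! < 0$ automatically. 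A direct computation gives
\[
f(n-2) = \frac{(n-2)!}{(k-1)!(n-k)(n-l)}\bigl[(n-1)(l-k)-(n-k)(n-l)\bigr],
\]
whose bracket is negative precisely when $(n-1)^2 > (2n-k-1)(l-1)$, i.e.~when \eqref{C1} holds.

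If $f(y)\geq 0$ at some $y\in(n-k-2,n-2)$, then by continuity $f$ attains its maximum on $[n-k-2,n-2]$ at an interior critical point, still called $y$, with $f(y)\geq 0$ and $f'(y)=0$. Applying the logarithmic derivative to each product turns $f'(y)=0$ into
\[
\prod_{j=0}^{n-k-2}(y-j) = B(l-1)!\prod_{j=l-1}^{n-k-2}(y-j)\cdot\frac{S_2}{S_1+S_2},
\]
where $S_1 := \sum_{j=0}^{l-2}\frac{1}{y-j}$ and $S_2 := \sum_{j=l-1}^{n-k-2}\frac{1}{y-j}$. Substituting back gives $f(y)=g(y)\geq 0$, where
\[
g(y) := -A(n-k-1)! + B(l-1)!\prod_{j=l-1}^{n-k-2}(y-j)\cdot\frac{S_1}{S_1+S_2}.
\]

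Mirroring the trick from the proof of Claim~\ref{claimA}, I would split off the smallest factor $\prod_{j=l-1}^{n-k-2}(y-j) = (y-(n-k-2))\prod_{j=l-1}^{n-k-3}(y-j)$ and combine $(y-(n-k-2))$ with $S_1$ to obtain $(y-(n-k-2))S_1 = \sum_{j=0}^{l-2}\frac{y-(n-k-2)}{y-j}$. Each summand, being of the form $\frac{y-a}{y-b}$ with $a>b$, is increasing in $y$. The leftover product $\prod_{j=l-1}^{n-k-3}(y-j)$ is a product of positive increasing terms, and $1/(S_1+S_2)$ is increasing because $S_1+S_2$ is a sum of positive decreasing terms. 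Hence $g$ is increasing on $(n-k-2,n-2)$, so $g(y)\leq g(n-2)$. Setting $\tilde S_1 := \sum_{i=n-l}^{n-2}\frac{1}{i}$ and $\tilde S_1+\tilde S_2 := \sum_{i=k}^{n-2}\frac{1}{i}$, routine simplification yields
\[
g(n-2) = \frac{(n-1)!\bigl[(n-k)\tilde S_1 - (n-l)(\tilde S_1+\tilde S_2)\bigr]}{(k-1)!(n-k)(n-l)(\tilde S_1+\tilde S_2)},
\]
whose bracket is precisely the left-hand side of \eqref{C2} and is therefore strictly negative. So $g(n-2)<0$, contradicting $g(y)\geq 0$.

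The main obstacle is the verification that $g$ is increasing in $y$: one must isolate exactly the factor $(y-(n-k-2))$ from the product and pair it with $S_1$ so that every remaining piece is manifestly increasing, because neither $S_1$ (decreasing) nor the full product alone is individually monotone in a useful way. Once this monotonicity is in hand, conditions \eqref{C1} and \eqref{C2} force sign contradictions at $x=n-2$ and at the critical-point substitution respectively, and the remainder of the argument is pure bookkeeping with binomial identities.
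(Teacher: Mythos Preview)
Your argument is correct and follows exactly the template of Claim~\ref{claimC}, which is precisely what the paper does: it says ``We argue exactly in the same way as in the proof of Claim~\ref{claimC}'' and arrives at \eqref{cond:claimC} with $i=2$, which is the negation of \eqref{C2}. Your write-up is simply a fully unpacked version of that one-line reference, including the explicit endpoint check $f(n-2)<0$ via \eqref{C1} (which the paper leaves implicit in the phrase ``in the same way'') and the monotonicity-of-$g$ trick borrowed from Claim~\ref{claimA}.
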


\begin{proof}
We argue exactly in the same way as in the proof of Claim~\ref{claimC}. Then 
we get \eqref{cond:claimC} for $i=2$, which reduces to
\[
 (n-k)\sum_{j=0}^{l-2}\frac1{n-2-j}\geq(n-l)
\sum_{j=0}^{n-k-2}\frac1{n-2-j}.
\]
This is the opposite inequality to \eqref{C2}, and this
contradiction proves the claim.
\end{proof} 

\section*{Acknowledgment}
I would like to thank the two referees for their helpful comments.

\end{document}